\numberwithin{equation}{section}
\renewcommand{\thetheoremName}
\newcommand{\IC}{\mathbb{C}}
\newcommand{\IF}{\mathbb{F}}
\newcommand{\IT}{\mathbb{T}}
\newcommand{\IN}{\mathbb{N}}
\newcommand{\IG}{\mathbb{G}}
\newcommand{\IQ}{\mathbb{Q}}
\newcommand{\IZ}{\mathbb{Z}}
\newcommand{\calF}{\mathcal{F}}
\newcommand{\calO}{\mathcal{O}}
\newcommand{\calS}{\mathcal{S}}
\newcommand{\im}{\mathfrak{m}}
\newcommand{\Z}{\IZ}
\newcommand{\Zp}{\IZ_p}
\newcommand{\Zpx}{\IZ_p^\times}
\newcommand{\Cpx}{\IC_p^\times}
\newcommand{\Fpx}{\IF_p^\times}
\newcommand\robout{\bgroup\markoverwith {\textcolor{blue}{\rule[0.5ex]{2pt}{0.4pt}}}\ULon} 
\newcommand{\ds}{\displaystyle}
\def\Hom{\mathrm{Hom}}
\def\Spec{\mathrm{Spec}}
\def\Spf{\mathrm{Spf}}
\def\wt{\widetilde}
\DeclareMathOperator\ord{ord}
\DeclareMathOperator\Fil{Fil}
\newtheorem{theorem}{Theorem}[section]
\newtheorem*{theorem*}{Theorem}
\newtheorem{lemma}[theorem]{Lemma}
\newtheorem{prop}[theorem]{Proposition}
\theoremstyle{definition}
\theoremstyle{remark}
\newtheorem{example}[theorem]{Example}
\newtheorem{remark}[theorem]{Remark}
\thanks{The first author's research was partially funded by NSF grant DMS-1702178.  The second author's research was partially funded by START-prize Y-966 of the Austrian Science Fund (FWF) under P.I. Harald Grobner.}
\begin{document}
\title{Non-vanishing of critical $L$-values in Hida families
}
\author{Robert Pollack and Vlad Serban}

\bibliographystyle{alpha}

\begin{abstract}
We study the vanishing of $L(f,\chi,j)$ as $f$ runs through all classical forms in a $p$-adic Hida family (including forms with arbitrarily high nebentype at $p$), $\chi$ runs through all characters of $p$-power conductor, and $j$ is a critical value.  We show that if infinitely many of these $L$-values vanish (apart from the ones forced to vanish by the sign of their functional equation) then this infinitude of vanishing must be exceptionally regular, so regular in fact that one can typically rule out this possibility in any given example.  Indeed, we systematically verified that such regular vanishing does not occur in multiple Hida families twisted by a wide range of quadratic characters by computing the corresponding two-variable $p$-adic $L$-functions via overconvergent modular symbols.

\end{abstract}

\maketitle
\section{Introduction}
Let $f$ denote a cuspidal Hecke eigenform of weight $k\geq 2$. The critical values of the complex $L$-function associated to $f$, namely $L(f,\chi,j)$ for $1\leq j\leq k-1$ and $\chi$ a Dirichlet character, are of particular interest due to their connection to arithmetic information arising from $f$ as predicted by the Bloch-Kato conjectures. 
As the non-vanishing of these values relate to ranks of elliptic curves, Selmer groups and other crucial arithmetic data, the non-vanishing of such $L$-values is a well-studied question (see, e.g., \cite{Rohrlich} and also the survey \cite{Vatsalmodp} for non-vanishing modulo $p$). 

In this note, we study the non-vanishing of such critical $L$-values in Hida families; that is, we study the $L$-values $L(f,\chi,j)$ where $f$ runs through a fixed Hida family, $\chi$ runs through characters of $p$-power conductor and $j$ is a critical value.  In \cite{Greenberg}, Greenberg speculates that in this family of $L$-values there is only a finite amount of vanishing apart from the $L$-values forced to vanish due to the sign of their functional equation.

In tackling this question, our main tool is the two-variable $p$-adic $L$-function.  For a fixed ordinary eigenform $f$, its associated  $p$-adic $L$-function (as in \cite{MTT}) interpolates the algebraic parts of $L(f,\chi,j)$ for $\chi$ of $p$-power conductor and $j$ critical.  There is also a two-variable $p$-adic $L$-function which interpolates all of these values as $f$ runs through all forms in a fixed Hida family. It was constructed by Kitigawa \cite{Kitigawa} building on work of Mazur and separately by Greenberg and Stevens \cite{GSperiods93}.

When the Hida family has rank 1 over the Iwasawa algebra, the two-variable $p$-adic $L$-function can be expressed as $p-1$ power series in two variables (the weight variable and the cyclotomic variable) and thus each such power series can be seen as a function on the unit two-dimensional polydisk.  In this light, our non-vanishing question amounts to whether or not the two-variable $p$-adic $L$-function vanishes infinitely often at the special points in this polydisk which correspond to the classical $L$-values under consideration. (See Remark \ref{rmk:bigrank} for a discussion about the case when this rank is bigger than 1.)\par 

 One of the main reasons this non-vanishing question is a non-trivial one is that one must simultaneously deal with infinitely many possible $p$-power order nebentypes in the Hida family alongside infinitely many possible $p$-power order characters in the cyclotomic variable. However, it turns out $p$-adic formal power series are just rigid enough to prove an \emph{unlikely intersection result} in this context, namely to prove that a component of the $p$-adic $L$-function vanishing along infinitely many \emph{special values} has to have a \emph{special} shape. More precisely, if we pretend that $k,j$ are fixed and thus only vary the nebentypes and $p$-power character, our special values are essentially the torsion points on the $p$-adic polydisk of the formal torus  $\widehat{\IG}_m^2$ over $\IZ_p$. If we moreover pretend that the $p$-adic $L$-function is just a two-variable polynomial, it follows from a classical result known as the Manin--Mumford conjecture (see e.g., \cite{Zannierbook}) that the $p$-adic $L$-function must vanish along a translate of a one-dimensional subtorus by some torsion point. Such Manin--Mumford type theorems were extended in work of the second author \cite{Serban:2016aa,SerbanpadicMM,Serbanunlikely} to the $p$-adic analytic functions in question, so that building on those results we can deal with the full set of critical values. We thus show in section \ref{section:rigidity}: 

\begin{theorem}
\label{thm:mainrigidityintro}  
Consider one of the finitely many branches of a two-variable $p$-adic $L$-function expanded as $L_{p,i}(w,T)\in \IZ_p[[w,T]]$ for a fixed choice of topological generator $\gamma$ of $1+p\IZ_p$ (or of $1+p^2\IZ_p$ if $p=2$). Assume $L_{p,i}$ vanishes at infinitely many critical $L$-values for the interpolated cuspidal eigenforms. Then $L_{p,i}(w,T)$ must be, possibly over a finite extension $\IZ_p[\xi]$, divisible by a factor of the form
$$\gamma^{M}(w+1)^N-\xi(T+1)\text{ or }\gamma^{M}(w+1)-\xi(T+1)^N$$ for some fixed $M,N\in \IZ_p$ and a fixed $p$-power root of unity $\xi$. 
\end{theorem}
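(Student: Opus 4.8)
The plan is to translate the statement entirely into the geometry of the formal torus $\widehat{\IG}_m^2$ over $\Zp$ and then invoke the $p$-adic unlikely-intersection theorems of the second author. First I would record the interpolation property precisely: for the branch $L_{p,i}(w,T)$, a classical form of weight $k$ and nebentype of $p$-power order at $p$, twisted by a character $\chi$ of $p$-power conductor and evaluated at a critical integer $j$, is computed by specializing $w+1 \mapsto \gamma^{k-2}\xi_1$ and $T+1\mapsto \gamma^{j-1}\xi_2$, where $\xi_1,\xi_2$ are $p$-power roots of unity determined by the nebentype and by $\chi$. Thus the assumed vanishing of infinitely many critical $L$-values says exactly that the nonzero $p$-adic analytic function $L_{p,i}$ on the open polydisk vanishes at infinitely many \emph{special points} of the form $(\gamma^{a}\xi_1-1,\ \gamma^{b}\xi_2-1)$ with $a=k-2\ge 0$, $b=j-1$, $0\le b\le a$, and $\xi_1,\xi_2\in\mu_{p^\infty}$.

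Next I would arrange the infinitude into a usable shape. Since the discrete data $(a,b)$ ranges over the region $0\le b\le a$ while the torsion data $(\xi_1,\xi_2)$ ranges over $\mu_{p^\infty}^2$, a pigeonhole/compactness argument over the discrete parameters lets me extract an infinite subfamily of vanishing special points, and these points are precisely the points of (finitely many cosets of) the torsion subgroup $\mu_{p^\infty}^2\subset\widehat{\IG}_m^2(\IC_p)$ translated by the structured elements $(\gamma^{a},\gamma^{b})$. This places us exactly in the framework for which the cited Manin--Mumford-type results for $p$-adic analytic functions \cite{Serban:2016aa,SerbanpadicMM,Serbanunlikely} (the analytic refinement of the classical statement in \cite{Zannierbook}) were designed.

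The heart of the argument is then to apply that input: a nonzero $p$-adic analytic function whose zero locus contains infinitely many such special points must have a zero locus containing a \emph{special curve}, namely a torsion translate of a one-dimensional formal subtorus of $\widehat{\IG}_m^2$. A continuous character $\widehat{\IG}_m^2\to\widehat{\IG}_m$ has the form $(x_1,x_2)\mapsto x_1^{N_1}x_2^{N_2}$ with $N_1,N_2\in\Zp$, and such a translate is cut out by $x_1^{N_1}x_2^{N_2}=\zeta$ for a torsion $\zeta$. For the kernel to be one-dimensional the pair $(N_1,N_2)$ must be primitive, so at least one $N_i$ is a unit; normalizing that exponent to $1$ and writing the resulting constant in $1+\mathfrak m$ as $\gamma^{M}\xi$ with $M\in\Zp$ and $\xi\in\mu_{p^\infty}$ yields, after the finite base change to $\Zp[\xi]$, exactly the two advertised shapes
$$\gamma^{M}(w+1)^{N}-\xi(T+1)\qquad\text{or}\qquad \gamma^{M}(w+1)-\xi(T+1)^{N},$$
in coordinates $x_1=w+1$, $x_2=T+1$, with $M,N\in\Zp$; the constraint $0\le b\le a$ is what guarantees that infinitely many of the special cosets genuinely meet the curve.

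Finally I would deduce divisibility. Writing $g$ for the relevant defining element, $g$ is irreducible in $\Zp[\xi][[w,T]]$, and since $L_{p,i}$ vanishes identically on the irreducible curve $\{g=0\}$, Weierstrass preparation together with the fact that this power series ring is a unique factorization domain (so that the prime ideal $(g)$ equals its own radical) gives $g\mid L_{p,i}$; I would phrase this divisibility analytically, on the polydisk, to account for the fact that $g$ may be a unit in the local ring at the origin. The main obstacle is the central step: correctly setting up and invoking the $p$-adic unlikely-intersection theorem for the genuinely analytic (non-polynomial) function $L_{p,i}$ while simultaneously controlling \emph{two} independent sources of infinitude — the $p$-power nebentypes in the weight variable and the $p$-power cyclotomic characters in the cyclotomic variable — so that enough vanishing survives the passage to a single fixed special curve. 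The bookkeeping that turns the abstract special subvariety into the explicit binomial factor, together with the analytic interpretation of divisibility, are the remaining technical points.
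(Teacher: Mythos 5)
Your skeleton---interpret critical values as special points on $\widehat{\IG}_m^2$, invoke the second author's $p$-adic unlikely-intersection results to place a torsion translate of a one-dimensional formal subtorus inside the zero locus, then convert that subtorus into an explicit binomial divisor via irreducibility and unique factorization in $\IZ_p[\xi][[w,T]]$---is the right one, and your last step essentially reproduces the paper's Lemmas \ref{lemma:subtoriexplicit} and \ref{lemma:subtoritranslate}. But the central step has a genuine gap. You never use the fact (Proposition \ref{prop:onlycentralvanishing}) that non-central critical values cannot vanish, which forces every vanishing point to have $j=k/2$, i.e.\ exponents $(a,b)=(k-2,\,k/2-1)$ lying on a single $\IZ_p$-line $b=a/2$. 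This is not a cosmetic simplification: the statement you invoke as ``the heart of the argument''---that a nonzero analytic function vanishing at infinitely many points $(\gamma^a\xi_1-1,\gamma^b\xi_2-1)$ must vanish along a torsion translate of a subtorus---is \emph{false} without a linear constraint on the exponents. The remark following Proposition \ref{prop:mainrigidity} cites \cite[Proposition 2.11]{Serbanunlikely} for power series vanishing at infinitely many such points whose exponent set satisfies no $\IZ_p$-linear relation and which admit no toroidal factor; the inequality $0\le b\le a$ that you keep does not rule these out.

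Relatedly, your pigeonhole reduction to ``finitely many cosets of the torsion subgroup translated by $(\gamma^a,\gamma^b)$'' breaks down precisely in the hard case, namely when infinitely many distinct weights occur: the translates $(\gamma^a,\gamma^b)$ are then pairwise distinct, so the vanishing points spread over infinitely many cosets with possibly only finitely many points in each, and since the points themselves are not torsion (the factors $\gamma^a$ obstruct this), part (1) of Theorem \ref{thm:JTNBresult}---a statement about honest torsion points---does not apply to them. The paper's proof of Proposition \ref{prop:mainrigidity} supplies the missing mechanism: after pigeonholing onto a line $s=kU$, use compactness of $\IZ_p$ to pass to a subsequence of weights converging $p$-adically to some $K$, translate coordinates by $(\gamma^{K},\gamma^{KU})$ so the exponents tend to $0$, and note that the vanishing points now only \emph{approach} the torsion points $(\zeta-1,\zeta'-1)$ without equaling them. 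One then needs the quantitative Tate--Voloch-type lower bound of part (2) of Theorem \ref{thm:JTNBresult} (values at special points away from a special subvariety are bounded below in absolute value), not the Manin--Mumford part. This approximation ingredient is exactly what resolves the ``two independent sources of infinitude'' that you correctly flag as the main obstacle, and it is absent from your proposal.
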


Theorem \ref{thm:mainrigidityintro} shows that an infinite amount of vanishing implies vanishing along a full formal subtorus translate (and hence vanishing at \emph{every} critical value on that component) which would seem quite surprising unless forced by the sign of the functional equation. We note that rigidity results for $p$-adic power series of the same flavor were also established and utilized for interesting arithmetic applications by Monsky \cite{MR614398}, Hida \cite{HidaHeckeFieldsGrowth} and Stubley \cite{Stubley}. 

\par

Finally, we combine our rigidity results with explicit computations of two-variable $p$-adic $L$-functions via overconvergent modular symbols to show that such vanishing along a formal subtorus does not occur.  Specifically, our rigidity results imply that 
the algebraic part of one of the following $L$-values:
$$
L(f_{2,\chi},\chi^a,1) \mbox{ and } L(f_{2,\chi^a},\chi,1)
$$
is divisible by $p$ where $\chi$ is a fixed primitive character and $\chi^a$ varies over all powers of $\chi$.  Here $f_{2,\epsilon}$ is the specialization of our Hida family to weight 2 with nebentype $\epsilon$.  
\par

Finding the valuation of all of these $L$-values is a finite computation which can be efficiently done with overconvergent modular symbols (see section \ref{sec:computations}).  
We examined three Hida families twisted by hundreds of quadratic characters, and in all of these examples, the valuations of these $L$-values decreased as the conductor of $\chi$ increased.  In every example we were ultimately able to find $\chi$ with large enough conductor so that {\it all} of the above $L$-values were {\bf not} divisible by $p$ implying that only finitely many classical $L$-values in the twisted Hida family vanish and obtained:

\begin{theorem}\label{thm:maincomputational}
Besides the $L$-values forced to vanish because of the sign of the functional equation, the vanishing 
$$L(f_{k,\epsilon}\otimes \chi_D,\chi_p,j)=0$$
holds for only finitely many such $L$-values, where:\begin{itemize}
    \item the cusp forms $f_{k,\epsilon}$ are allowed to vary through all the specializations to weight $k\geq 2$ and 
    $p$-power nebentype $\epsilon$ on the ordinary $p$-adic families of tame level $N$ for $(p,N)\in\{(3,5), (5,3), (11,1)\}$ which lie above the component of weight space containing weight 2,
    \item $\chi_D$ runs through all quadratic characters of discriminant $D$ coprime to $pN$ with $|D|<350$ for $p=3$ and $|D|<500$ for $p=5,11$.
    \item $\chi_p$ runs through all the characters of $p$-power conductor and where $1\leq j\leq k-1$. 
\end{itemize}
\end{theorem}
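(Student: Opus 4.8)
The plan is to argue by contradiction, using the rigidity result of Theorem~\ref{thm:mainrigidityintro} to collapse an a priori infinite problem into a finite computation. Fix one of the three families $(p,N)\in\{(3,5),(5,3),(11,1)\}$; the other two are handled identically. Suppose, contrary to the claim, that infinitely many of the critical values $L(f_{k,\epsilon}\otimes\chi_D,\chi_p,j)$ for this family vanish without being forced to do so by the sign of the functional equation. Since the discriminants $D$ in the stated range are finite in number, and each $\chi_D$-twisted family gives rise to only finitely many branches $L_{p,i}(w,T)\in\IZ_p[[w,T]]$, the pigeonhole principle produces a single quadratic twist $\chi_D$ and a single branch $i$ along which infinitely many such vanishings occur. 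Each vanishing corresponds, via the interpolation property of the two-variable $p$-adic $L$-function, to the vanishing of $L_{p,i}$ at a special point of the polydisk (a torsion-type point encoding the weight and nebentype in $w$ and the pair $(\chi_p,j)$ in $T$). Applying Theorem~\ref{thm:mainrigidityintro} to this branch, we conclude that, possibly over a finite extension $\IZ_p[\xi]$, the series $L_{p,i}(w,T)$ is divisible by a factor
$$
\gamma^M(w+1)^N-\xi(T+1)\quad\text{or}\quad \gamma^M(w+1)-\xi(T+1)^N
$$
for some $M,N\in\IZ_p$ and a $p$-power root of unity $\xi$.

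Next I would pass to the residue field to extract a testable mod~$p$ consequence. Since $\gamma\in 1+p\IZ_p$ and $\xi$ is a $p$-power root of unity, both reduce to $1$ modulo the maximal ideal of $\IZ_p[\xi]$, whose residue field is $\IF_p$, so the divisor above reduces to $(w+1)^N-(T+1)$ or $(w+1)-(T+1)^N$. As divisibility is preserved under reduction, $\overline{L_{p,i}}$ is divisible by the corresponding reduced factor in $\IF_p[[w,T]]$. Specializing the resulting one-dimensional subtorus to weight $2$ and $j=1$ — which the special form of the factor guarantees meets the arithmetic locus in infinitely many classical points — shows that the algebraic parts of the associated $L$-values all reduce to $0$ modulo $p$. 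Tracking which of the two factor shapes occurs, these values are precisely one of the two families $L(f_{2,\chi},\chi^a,1)$ (fixed weight-$2$ nebentype $\chi$, varying cyclotomic twist $\chi^a$) and $L(f_{2,\chi^a},\chi,1)$ (varying weight-$2$ nebentype $\chi^a$, fixed twist $\chi$), for a fixed primitive character $\chi$ of $p$-power conductor with $\chi^a$ ranging over its powers. In either case we obtain that $\ord_p$ of the algebraic part of one of these families is strictly positive for every $a$.

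It therefore suffices to refute this mod~$p$ divisibility for each of the three families and each quadratic twist in the stated range. Concretely, I would compute the two-variable $p$-adic $L$-function of each $\chi_D$-twisted Hida family via overconvergent modular symbols to sufficient $p$-adic precision, evaluate it at the weight-$2$, $j=1$ special points corresponding to $L(f_{2,\chi},\chi^a,1)$ and $L(f_{2,\chi^a},\chi,1)$, and read off the valuation of the algebraic part. Exhibiting a single power $a$ (equivalently, a single primitive $\chi$ of large enough $p$-power conductor) for which the relevant algebraic $L$-value is a $p$-adic unit contradicts the divisibility just derived, hence the hypothesis of infinite non-forced vanishing, and the theorem follows. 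The heuristic guiding the search is that these valuations decrease as the conductor of $\chi$ grows, so a unit value is reached at finite conductor.

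The main obstacle is genuinely the computational step rather than the structural reduction: Theorem~\ref{thm:mainrigidityintro} collapses the infinite double family of weights, nebentypes, cyclotomic characters, and critical integers $j$ down to a finite check at weight $2$ and $j=1$, but nothing a priori guarantees that the search for a unit $L$-value terminates for every family and every twist. The substance of the proof is thus the explicit verification, carried out in Section~\ref{sec:computations}, that such a $\chi$ exists in all of the listed cases; making this feasible is exactly what the efficiency of the overconvergent modular symbol computation of the relevant component of the $p$-adic $L$-function provides. Two bookkeeping points require care along the way: correctly excising the $L$-values whose vanishing is forced by an odd functional-equation sign before invoking Theorem~\ref{thm:mainrigidityintro}, and correctly matching each of the two admissible factor shapes to its corresponding family of weight-$2$ values, so that the computation tests the right specializations.
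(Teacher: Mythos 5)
Your structural reduction (pigeonhole to a single twist and branch, then Theorem~\ref{thm:mainrigidityintro} to get a toroidal factor) matches the paper, but the step where you extract the testable consequence contains a quantifier error that invalidates your computational criterion. You claim that divisibility of $L_{p,i}$ by $\gamma^M(w+1)^N-\xi(T+1)$ forces $\ord_p$ of the algebraic part of \emph{every} member of one of the families $L(f_{2,\chi},\chi^a,1)$, $L(f_{2,\chi^a},\chi,1)$ to be positive, and you then propose to refute this by exhibiting a \emph{single} unit value. Both halves are wrong. Evaluate the toroidal factor at the point $(\zeta-1,\zeta^a-1)$: it equals
\begin{equation*}
\gamma^M\zeta^N-\xi\zeta^a \;=\; \zeta^N(\gamma^M-1)+(\zeta^N-\xi\zeta^a),
\end{equation*}
and since $\gamma^M-1\in p\IZ_p$ while a difference of two \emph{distinct} $p$-power roots of unity has valuation strictly less than $1$, this has valuation $\geq 1$ precisely when $\zeta^a=\xi^{-1}\zeta^N$, i.e.\ at \emph{one} (unknown) value of $a$ for each $\zeta$, and valuation $<1$ at all others. (Your reduction mod $p$ obscures this: all $p$-power roots of unity reduce to $1$, so every special point reduces onto the reduced subtorus, but that only gives positive valuation, not valuation $\geq 1$.) So the correct consequence is: for every primitive $\chi$, \emph{some} value among the two families is divisible by $p$; and the correct refutation is the opposite of yours: one must fix a single primitive $\zeta$ of sufficiently large order and verify that \emph{all} of the values $L_{p,i}(\chi_D,\zeta-1,\zeta'-1)$ and $L_{p,i}(\chi_D,\zeta'-1,\zeta-1)$, as $\zeta'$ ranges over the whole group $\mu_{p^m}$, have valuation $<1$. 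This is exactly what the paper's Proposition~\ref{prop:modpchecking} formalizes and what the computations in Section~\ref{sec:computations} check. A single unit value, as you propose, is compatible with the existence of a toroidal factor and proves nothing.

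Two further points you leave unresolved are essential to making the finite check valid. First, ``large enough conductor'' must be made effective: one needs the order of $\xi$ bounded, and the paper gets the bound $\lambda\cdot\frac{p}{p-1}$ from the hypothesis that some $\phi_\lambda(X)$ is a unit power series (vanishing $\mu$-invariant), via a count of zeroes of the one-variable specializations; without such a bound no finite set of $\zeta'$ suffices. Second, for the branches whose functional equation has sign $-1$ (e.g.\ $i=2$ for $(p,N)=(5,3)$ and $i=0$ for $(p,N)=(11,1)$), the series $L_{p,i}$ genuinely \emph{is} divisible by the toroidal factor $(1+w)-(1+T)^2$, so the check must be applied to the quotient $L_{p,i}(w,T)/\bigl((1+w)-(1+T)^2\bigr)$, evaluated at shifted points such as $(\zeta-1+p,\zeta'-1)$ to avoid the zeroes of the denominator, as in inequality~\eqref{eqn:ineq2}. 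You flag this as bookkeeping, but without it the criterion fails outright on those branches.
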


It is clear that the computations and results such as Theorem \ref{thm:maincomputational} can be extended. It is however unclear at the present time whether this approach guarantees that the finite vanishing result, when true for a particular Hida family, can be established by a finite amount of computation. Indeed, one cannot a priori exclude that the $p$-adic $L$-function contains a toroidal factor modulo $p$ which does not lift to characteristic zero. However, we have yet to encounter such an exception in practice.

\section*{Acknowledgements}
We would like to thank Frank Calegari and Rob Harron for helpful conversations on the topics of this paper. 

\section{Background on $p$-adic $L$-functions}

Fix a prime $p$ and let $\IT$ denote a local summand of the Hida-Hecke algebra with tame level $\Gamma_0(N)$. We assume that $\IT$ is rank 1 over the Iwasawa algebra and thus the eigenforms in this Hida family are in one-to-one correspondence with the points of a fixed component of weight space.  
The classical specializations of this Hida family correspond to homomorphisms $\varphi :\ \Zpx \to \Cpx$ of the form $x \mapsto x^{k-2} \epsilon(x)$ with $k \geq 2$ is an integer,  $\epsilon$ is a Dirichlet character of conductor $p^n$ such that $\epsilon(-1) = (-1)^k$, and $\varphi|_{\Fpx} = \omega^r$.   Here $r$ is in a fixed residue class modulo $p-1$ corresponding to our fixed component of weight space.  Write $f_{k,\epsilon}$ for the unique normalized eigenform corresponding to the specialization at $\varphi$.

For any of these eigenforms $f = f_{k,\epsilon}$, there is an associated $p$-adic $L$-function (as in \cite{MTT}) which interpolates the special values:
$$
\frac{L(f,\chi,j)}{(-2\pi i)^{j-1} \Omega_f^\pm} \in \overline{\IQ}
$$
where $1 \leq j \leq k-1$, $\Omega_f^\pm$ is a canonical period of $f$,  and the sign of $\Omega_f^\pm$ is given by the sign of $(-1)^{j-1}\chi(-1)$.  That is, there exists a measure $\mu_f$ on $\Zpx$ such that 
$$
\int_{\Zpx} x^{j-1} \chi(x)~ d\mu_f(x) =
\frac{1}{a_p(f)^n} \cdot \frac{p^{nj} (j-1)!}{\tau(\chi^{-1})} \cdot \frac{L(f, \chi^{-1},j)}{(-2\pi i)^{j-1} \Omega_f^\pm}.
$$
Here $\chi$ is a Dirichlet character of conductor $p^n > 1$ and $\tau$ denotes a Gauss sum.

In this paper, we will primarily be interested in the power series representation of the $p$-adic $L$-function in the ``$T$-variable".   To this end, set $q=p$ if $p>2$ and $q=4$ for $p=2$.  Choose $\gamma$ a topological generator of $1+q\Z_p$. Then for each fixed $i \pmod{p-1}$, there exists a power series $L_{p,i}(f,T) \in \Zp[[T]]$ such that 
$$
L_{p,i}(f, \gamma^{j-1}\zeta-1) = \int_{\Zpx} \langle x \rangle^{j-1} \chi_\zeta(x) \omega^i(x) ~d\mu_f(x)
$$
where $1\leq j \leq k-1$, $\zeta$ is a $p$-power root of unity, and $\chi_\zeta$ is the character of $\Zpx$ which factors through $1+q\Zp$ and sends $\gamma$ to $\zeta$.  We refer to this as the $i$-th branch of the $p$-adic $L$-function---it is the one that records the values $\int_{\Zpx} \chi ~d\mu_f$ for characters $\chi$ whose restriction to $\Fpx$ equals $\omega^i$.  We note that the $T$-variable gives a non-canonical representation of the $p$-adic $L$-function depending on our choice of $\gamma$ as a topological generator of $1+q\Zp$.

These one-variable $p$-adic $L$-functions interpolate into a two-variable $p$-adic $L$-function along the Hida family (as in \cite[section 3.5]{EmertonPollackWeston}).  That is, for each fixed $i \pmod{p-1}$, there is a power series $L_{p,i}(w,T) \in \Zp[[w,T]]$ such that 
for each classical specialization $f_{k,\epsilon}$ we have
$$
L_{p,i}( \gamma^{k-2}\epsilon(\gamma)-1,T) = L_{p,i}(f,T).
$$
This representation of the two-variable $p$-adic $L$-function in the ``$w$-variable" is again non-canonical and depends on our choice of $\gamma$.

\section{Background on non-vanishing of $L$-values in Hida families}
\label{sec:back_nonvanish}

As $f_{k,\epsilon}$ runs through the eigenforms in a fixed Hida family, one can ask about the vanishing or non-vanishing of the values $L(f_{k,\epsilon},\chi,j)$ where $\chi$ runs through the $p$-power conductor characters and $1 \leq j \leq k-1$.

A first fact to state is that only the central values can vanish.

\begin{prop}\label{prop:onlycentralvanishing}
Let $f$ be a cuspidal eigenform in $S_k(\Gamma_1(M),\epsilon)$ and let $j$ be an integer such that $1 \leq j \leq k-1$.  If $L(f,\chi,j) = 0$, then $j = k/2$.
\end{prop}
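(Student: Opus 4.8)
The plan is to sandwich the critical integer $j$ between the half-plane of absolute convergence of the Euler product and its mirror image under the functional equation, so that the only integer left as a possible zero is the center. First I would pass to the completed $L$-function $\Lambda(f,\chi,s) = N_\chi^{s/2}(2\pi)^{-s}\Gamma(s)L(f,\chi,s)$ of the twist $f\otimes\chi$, where $L(f,\chi,s)$ denotes the primitive $L$-function of the newform underlying $f$ (this is the value appearing in the interpolation formula of Section 2). Since $\Gamma(s)$ has neither zeros nor poles at the integers $1\le j\le k-1$, vanishing of $L(f,\chi,j)$ in this range is equivalent to vanishing of $\Lambda(f,\chi,j)$. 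The two inputs are then: (i) Deligne's bound $|a_n(f)|\le d(n)\,n^{(k-1)/2}$, which makes $\sum_n a_n(f)\chi(n)n^{-s}$ converge absolutely for $\Re(s) > (k+1)/2$, so that the Euler product, and hence $L(f,\chi,s)$, is non-zero there; and (ii) the functional equation, which relates $\Lambda(f,\chi,s)$ to the completed $L$-function of a dual cuspidal eigenform $f^{\vee}$ of the same weight $k$ (the twist of $\overline{f}$ by $\overline{\chi}$) evaluated at $k-s$.

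Next I would run the bracketing argument. By (i), $L(f,\chi,j)\neq 0$ whenever $j > (k+1)/2$. Suppose instead $L(f,\chi,j)=0$ with $j < (k-1)/2$; then $\Lambda(f,\chi,j)=0$, so by (ii) the completed $L$-function of $f^{\vee}$ vanishes at $k-j > (k+1)/2$, forcing $L(f^{\vee},k-j)=0$. But $f^{\vee}$ also satisfies (i), a contradiction. Hence any zero must lie in the closed interval $(k-1)/2 \le j \le (k+1)/2$. When $k$ is even, the only integer in this interval is $j = k/2$, which proves the proposition in that case.

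The main obstacle is the remaining case of odd $k$, where the interval contains the two near-central integers $j = (k\pm 1)/2$; these sit exactly on the boundary lines of the two convergence regions and so are not reached by absolute convergence alone. To handle them I would invoke non-vanishing on the edge of the critical strip, the analogue for $f\otimes\chi$ of the statement $\zeta(1+it)\neq 0$. Concretely, the Rankin--Selberg convolution $L\big(f\otimes\chi\times\overline{f\otimes\chi},s\big)$ has a simple pole at the right edge of its critical strip, and a standard Landau-type argument then forces $L(f,\chi,s)\neq 0$ on the whole line $\Re(s) = (k+1)/2$, in particular at $j=(k+1)/2$; the point $j=(k-1)/2$ is then disposed of by the functional equation as above. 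This edge non-vanishing is the one genuinely analytic ingredient (see \cite{Rohrlich}); everything else in the argument is the formal convergence-plus-functional-equation sandwich, and the center $j=k/2$ indeed survives as a possible zero, consistent with the fact that it can be forced to vanish by the sign of the functional equation.
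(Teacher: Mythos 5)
Your proof is correct and is essentially the same argument as the paper's (which simply cites Rob Harron's MathOverflow answer): non-vanishing of the $L$-series in the region where its Euler product converges, reflected across the critical strip by the functional equation, leaves only the central point $j=k/2$ as a possible zero. If anything, you are more careful than the paper's sketch, which asserts convergence of the Euler product on the \emph{closed} half-plane $\mathrm{Re}(s)\ge (k+1)/2$ and thereby elides exactly the boundary case (relevant only for odd $k$) that you correctly isolate and dispose of via Rankin--Selberg non-vanishing on the edge line $\mathrm{Re}(s)=(k+1)/2$.
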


\begin{proof}
A proof of this fact is given by Rob Harron on MathOverflow \cite{Harron}.
The basic idea is that the Euler product defining the $L$-series converges for the real part of $s$ bigger than or equal to $\frac{k+1}{2}$ and is thus non-zero is this region.  The functional equation then gives the non-vanishing for the real part of $s$ smaller than or equal to $\frac{k-1}{2}$.
\end{proof}

Thus, we are left to consider the non-vanishing of the values $L(f_{\epsilon,k},\chi,k/2)$ for $k$ even.  One could ask if all but finitely many of these values are non-zero as $\epsilon$, $\chi$ and $k$ vary, but immediately one sees that this cannot hold in general simply because signs of functional equations can force infinitely many of these values to vanish.  That is, consider the $L$-values of the form:\ 
$$
L(f_{\epsilon^2,k},\epsilon^{-1},k/2).
$$
As the form $f_{\epsilon^2,k} \otimes \epsilon^{-1}$ has trivial nebentype, its functional equation relates the above $L$-value back to itself and is thus forced to vanish when the sign of the functional equation is $-1$. 

These particular values are interpolated by exactly two of the branches of the two-variable $p$-adic $L$-function, namely, $L_{p,i}(w,T)$ where $i = \frac{r}{2}$ or $i = \frac{r}{2} + \frac{p-1}{2}$.  Both of these branches have their own functional equations (with possibly different signs).  But if either has a sign of $-1$ then all of the values interpolated by that branch vanish (with finitely many exceptions caused by trivial zeroes of the $p$-adic $L$-functions).  See \cite[section 4]{Howard} for more details.

When the sign of the functional equation is $+1$ (and thus the above constraint is not present) Greenberg conjectures \cite{Greenberg} that all but finitely many of these values are non-zero.  When the sign of the functional equation is $-1$, Greenberg conjectures that $L'(f_{\epsilon^2,k},\epsilon^{-1},k/2)$ is non-zero for all but finitely many exceptions.\footnote{In fact, in \cite{Greenberg}, Greenberg only conjectures this non-vanishing when $k=2$, but in the literature this more general conjecture is often attributed to him.}

We note that in the $+1$ sign case, one can verify this conjecture by finding a single non-zero value.  Indeed, 
$L_{p,i}((1+T)^2-1,T)$ is an Iwasawa function which interpolates all of these values.  If one value is non-zero, then this Iwasawa function is non-zero and thus has only finitely many zeroes.  In the $-1$ sign case, Howard \cite{Howard} proves (using a big Heegner point) an analogous statement:\ namely, that Greenberg's conjecture on the non-vanishing of the derivatives can be verified by showing a single derivative is non-zero.

However, in \cite{Greenberg} where Greenberg states the above conjectures, he {\it nearly} states a broader conjecture writing that ``it seems reasonable to us to make an even broader conjecture" and this broader conjecture would imply that the values 
$
L(f_{\epsilon,k},\chi,j)
$
vanish finitely often as $f_{\epsilon,k}$ runs over all classical eigenforms in a Hida family, $\chi$ runs over all $p$-power conductor Dirichlet characters and $1\leq j \leq k-1$ (after one eliminates the values that are forced to vanish because of signs of functional equations).

Unlike the more restrictive conjecture, we know of no ``easy way" to verify this broader statement computationally without using the rigidity results of this paper.  Indeed, the rigidity results of this paper imply that if the above broader statement fails, then the infinite vanishing has to be very systematic---so systematic in fact that we can (and do in section \ref{sec:computations}) verify in examples that this systematic vanishing does not occur and thus only finitely many of these $L$-values vanish.

\section{Rigidity results}\label{section:rigidity}
We turn to establishing the key rigidity results for $p$-adic $L$-functions building on work of the second author in \cite{Serban:2016aa,SERBAN2022405}. These demonstrate and apply a $p$-adic formal version of the classical multiplicative Manin--Mumford conjecture (see, e.g., Zannier's book \cite{Zannierbook}). In particular, we use the following result (\cite[Theorem 1.3.]{Serban:2016aa}):
\begin{theorem}\label{thm:JTNBresult}
  Let $A=\calO_F[[X_1,\ldots, X_n]]/I$, where $\mathcal{O}_F$ denotes the ring of integers of a finite extension of $\mathbb{Q}_p$. Let $\calS$ denote the set $\{\zeta-1~\vert~ \zeta\in\mu_{p^\infty}(\overline{\IQ}_p)\}$ of torsion points of the formal multiplicative group and consider more generally the set of $n$-tuples $\calS^n$.
\begin{enumerate}
\item If the formal scheme $\Spf(A)$ contains infinitely many torsion points in $\calS^n$, then it contains a translate by a torsion point of a formal torus $\widehat{\IG}_m^k$ for some $k>0$.
\item There exists in $\Spf(A)$ a finite union $\mathcal{T}$ of translates by torsion points of formal subtori and an explicit constant $C_I>0$ depending on $I$ and our normalization of a $p$-adic absolute value $\vert-\vert_p$ such that there is $\phi\in I$ satisfying
$$\vert\phi(\zeta_1-1,\ldots, \zeta_n-1)\vert_p> C_I $$
provided $(\zeta_1-1,\ldots, \zeta_n-1)\in \calS^n\setminus (\mathcal{T}(\overline{\IQ}_p)\cap \calS^n)$.
\end{enumerate}
\end{theorem}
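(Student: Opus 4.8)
The plan is to treat both parts by induction on $n$, extracting the coset from the tension between the Galois action on $p$-power torsion and the rigidity of $p$-adic analytic functions. First I would reduce to the case that $\Spf(A)$ is reduced and irreducible, replacing $Z:=\Spf(A)$ by an irreducible component carrying infinitely many points of $\calS^n$. Since $I\subseteq \calO_F[[X_1,\dots,X_n]]$ and every $\phi\in I$ has bounded coefficients, $\phi$ converges at each torsion point (where $|X_i|_p<1$) and $\sigma(\phi(\mathbf x))=\phi(\sigma\mathbf x)$ for $\sigma\in\Gal(\overline{\IQ}_p/F)$; hence the set $\Sigma$ of torsion points on $Z$ is stable under $\Gal(\overline{\IQ}_p/F)$, which acts through the cyclotomic character by the endomorphisms $[h]\colon X_i\mapsto (1+X_i)^h-1$ for $h$ in an open subgroup $H\subseteq\Zpx$. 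The base case $n=1$ is then immediate from $p$-adic Weierstrass preparation: a nonzero element of $\calO_F[[X_1]]$ has only finitely many zeros in the open disk, so infinitely many torsion zeros force $I=0$ and $Z=\widehat{\IG}_m$.

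For part (1) in general I would follow a $p$-adic version of the Pila--Zannier strategy. Fixing a compatible system of roots of unity, a torsion point of order $p^m$ on $Z$ has the form $((\zeta_{p^m})^{e_1}-1,\dots,(\zeta_{p^m})^{e_n}-1)$ for an exponent vector $\mathbf e\in(\IZ/p^m)^n$, and the conditions $\phi(\dots)=0$ for $\phi\in I$ cut out a $p$-adic subanalytic set in the exponents. The Galois action scales $\mathbf e$ by $H$, so a single point of order $p^m$ already contributes $\gg p^m$ exponent vectors on $Z$, giving a lower bound on the count that grows with $m$. A $p$-adic counting input of Pila--Wilkie type would then force this subanalytic set to contain a positive-dimensional semialgebraic—indeed affine-linear—family of exponents, say one satisfying a rational relation $\sum a_i e_i\equiv c$. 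Exponentiating this relation yields a monomial identity $(1+X_1)^{a_1}\cdots(1+X_n)^{a_n}=\xi$ with $\xi$ a $p$-power root of unity, valid on a positive-dimensional subscheme of $Z$; this is precisely a translate by a torsion point of a formal torus $\widehat{\IG}_m^k$ with $k>0$ contained in $Z$.

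For the effective statement (2) I would first note that, by part (1) together with Noetherianity of $A$, the maximal torsion cosets contained in $Z$ form a finite union $\calT$; the remaining content is a Tate--Voloch-type separation, namely that torsion points off $\calT$ stay uniformly far from $Z$. Here I would return to the Weierstrass picture: after a coordinate change making a chosen generator $\phi$ distinguished in $X_n$, write $\phi=U\cdot P$ with $U$ a unit and $P$ a distinguished polynomial of degree $\lambda$, and bound $|\phi(\mathbf x)|_p$ from below using the Newton polygon of $P$ and the exact valuations $v_p(\zeta-1)=1/(p^{m-1}(p-1))$ for $\zeta$ of order $p^m$. A torsion point making $|\phi|_p$ very small would have its last coordinate abnormally close to a genuine root of $P$, and the rigidity of part (1) shows such near-coincidences can only persist along a coset; making the slope estimate uniform in $m$ produces the explicit constant $C_I$, depending on $I$ only through the Weierstrass data ($\lambda$-invariants and Newton slopes of a set of generators).

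The main obstacle throughout is that every $p$-power torsion point reduces to the identity modulo $\im_F$—equivalently $|\zeta_{p^m}-1|_p\to 1$, so the torsion crowds the boundary of the open polydisk. Consequently one can neither separate these points by reduction modulo $p$ (the usual Frobenius route to Manin--Mumford simply collapses all $p$-power torsion to the origin) nor replace the power series in $I$ by polynomial truncations. The crux of the argument is therefore to make the counting step in part (1) and the gap $C_I$ in part (2) uniform in the order $p^m$ in spite of this accumulation, and it is exactly the $p$-adic Weierstrass preparation together with the Newton-polygon slope analysis—feeding the requisite uniform analytic estimates into the counting input—that must supply this uniformity.
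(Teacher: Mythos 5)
Your proposal cannot be measured against a proof in the paper itself, because the paper does not prove this statement: it is imported verbatim from the second author's prior work (\cite[Theorem 1.3]{Serban:2016aa}), so the comparison must be with the argument given there. Your preliminary steps---passing to an irreducible component, Galois equivariance of power series with $\calO_F$-coefficients, the action of $\Gal(\overline{\IQ}_p/F)$ on torsion through the endomorphisms $[h]\colon X_i\mapsto (1+X_i)^h-1$ for $h$ in an open subgroup of $\Zpx$, and the $n=1$ case via Weierstrass preparation---are correct and do match the opening of that argument. The gap is the engine you propose for part (1). A Pila--Zannier argument requires a counting theorem applied to a \emph{fixed} definable set in which torsion points appear as rational points of growing height, and no such set exists here. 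For each $m$ your exponent vectors lie in the finite set $(\IZ/p^m\IZ)^n$, and the assignment $\mathbf{e}\mapsto(\zeta_{p^m}^{e_1}-1,\ldots,\zeta_{p^m}^{e_n}-1)$ changes with $m$; there is no single $p$-adic subanalytic set, in the sense required by the known $p$-adic analogues of Pila--Wilkie (Cluckers--Comte--Loeser type), interpolating these finite sets as $m\to\infty$. The obstruction is structural and is exactly the one you flag at the end: the only candidate uniformization of $\widehat{\IG}_m$, the formal logarithm, annihilates all of $\mu_{p^\infty}$, and $\IQ_p/\IZ_p$ is discrete, so $p$-power torsion admits no analytic or definable parametrization by points of bounded height, unlike $x\mapsto e^{2\pi i x}$ on a real fundamental domain. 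Your closing paragraph proposes to repair this by making Weierstrass/Newton-polygon estimates uniform in $m$ and ``feeding'' them into the counting input, but there is no counting input to feed: the tool is wrong, not merely incomplete. The actual proof uses the large Galois orbits directly and with no counting: Galois stability makes the Zariski closure of the torsion stable under $[h]$ for $h$ in an open subgroup of $\Zpx$, and this is played against Weierstrass preparation (which bounds the number of zeros in each fiber, hence compares torsion orders coordinate by coordinate) to force a $\IZ_p$-linear relation among exponents, i.e.\ a monomial identity $(1+X_1)^{a_1}\cdots(1+X_n)^{a_n}=\xi$ cutting out the desired coset.

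Two further steps in your part (2) do not hold as written. First, the finiteness of the union $\mathcal{T}$ does not follow from part (1) plus Noetherianity: part (1) only produces \emph{one} coset inside the closure of the torsion, and iterating it can a priori produce infinitely many incomparable cosets; one needs the stronger statement, which the genuine argument proves, that an irreducible subscheme with Zariski-dense torsion \emph{is} a torsion coset. Second, your key step ``a torsion point making $|\phi|_p$ small has its last coordinate abnormally close to a root of $P$, and the rigidity of part (1) shows such near-coincidences persist only along a coset'' is circular: converting $p$-adic \emph{proximity} into \emph{containment} is precisely the Tate--Voloch content of part (2), which is strictly stronger than part (1), and the uniformity in the torsion order $p^m$ that this requires is again the crux you name but do not supply. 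So both halves of the proposal rest on machinery that is either unavailable (the counting theorem) or presupposes the statement being proved (the proximity rigidity).
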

Note that the translates above and throughout this text are taken with respect to the multiplicative formal group law $\calF(X,Y)=(X+1)(Y+1)-1$ in each coordinate. \par
Theorem \ref{thm:JTNBresult} may be interpreted as follows:\ the torsion points $\calS^n$ are a set of \emph{special points} which may be viewed as zero-dimensional \emph{special subschemes}, where the latter constitute of translates by a torsion point of formal subtori of $\widehat{\IG}_m^n$. The theorem then states that the formal Zariski-closure of a set of special points is a special subscheme. Moreover, in the ultrametric topology it suffices for infinitely many special points to just approach a subscheme to force it to contain a special subscheme (this is a version of the Tate--Voloch conjecture \cite{TateVoloch} for $p$-adic power series rings). 
\par
We shall adapt the $p$-adic results to the context of two-variable $p$-adic $L$-functions. In particular, we shall apply Theorem \ref{thm:JTNBresult} when $n=2$ and when $I$ is the ideal generated by a branch of the $p$-adic $L$-function $L_{p,i}(w,T)$. We consider the larger set of special points at which $L_{p,i}(w,T)$ interpolates special $L$-values in the Hida family. Using that this set is closely related to the torsion points of the formal multiplicative group, we deduce in this section the main rigidity result relevant to two-variable $p$-adic $L$-functions. Recall that $\gamma$ is a topological generator of $1+q\Zp$.
\begin{prop}\label{prop:mainrigidity}
Consider a two-variable power series $L_p(w,T)\in \IZ_p[[w,T]]$. Let $\Sigma$ denote a subset of the points at which special values are interpolated, so (under suitable normalization):\ 
$$\Sigma\subset \{(\gamma^k\zeta-1,\gamma^s\zeta'-1)\in\im _{\overline{\IQ}_p}^2:\ k\in \IN, 0\leq s\leq k-1, \zeta,\zeta'\in\mu_{p^\infty}\}. $$
Assume that there exists a non-trivial character $\chi\in X_*(\widehat{\IG}_m^2)=\Hom_{frm.grp}(\widehat{\IG}_m^2,\widehat{\IG}_m)$ such that the image $\chi(\gamma^k-1,\gamma^s-1)$ over $s,k$ varying in $\Sigma$ is a finite set (for instance if $s$ is constant or $s=k/2$ in $\Sigma$). Then exactly one of the following holds:\ 
\begin{enumerate}
    \item There are only finitely many $(w,T)\in \Sigma$ such that $L_p(w,T)=0$.
    \item The power series $L_p$ vanishes along the translate of a full positive-dimensional formal subtorus of $\widehat{\IG}_m^2/\IZ_p$ by an element of $\Sigma$. 
\end{enumerate}
\end{prop}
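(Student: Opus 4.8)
The plan is to prove the contrapositive: if $L_p$ vanishes at infinitely many points of $\Sigma$, then alternative (2) holds. Since a positive-dimensional subtorus translate through a point of $\Sigma$ automatically contains infinitely many points of $\Sigma$, alternative (2) forces infinitely many vanishing points while (1) forbids them; the two are therefore mutually exclusive, and proving this implication gives the ``exactly one'' dichotomy. So assume $L_p(P_m)=0$ for infinitely many distinct $P_m=(\gamma^{k_m}\zeta_m-1,\gamma^{s_m}\zeta'_m-1)\in\Sigma$.

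The first step is to use $\chi$ to strip the $\gamma$-factors off one coordinate. Writing $\chi=(a,b)$, so that $\chi$ acts by $(X_1,X_2)\mapsto(1+X_1)^a(1+X_2)^b-1$, one has $\chi(\gamma^k-1,\gamma^s-1)=\gamma^{ak+bs}-1$, which by hypothesis ranges over a finite set; after passing to a subsequence we may assume $ak_m+bs_m=c_0$ is constant. Replacing $\chi$ by its primitive part, I would complete it to an automorphism $\Psi=(\chi,\chi')$ of $\widehat{\IG}_m^2$ over $\IZ_p$ (in the two motivating cases $\chi=(0,1)$ or $\chi=(1,-2)$ this is immediate). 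The $\chi$-coordinate of $\Psi(P_m)$ is then $\gamma^{c_0}\eta_m-1$ with $\eta_m:=\zeta_m^a(\zeta'_m)^b\in\mu_{p^\infty}$, and composing with the fixed translation by $\gamma^{-c_0}\in\IZ_p^\times$ turns it into the honest torsion point $\eta_m-1$. Writing $\tilde L$ for the resulting transform of $L_p$, I have reduced to: $\tilde L$ vanishes at infinitely many points $(\eta_m-1,y_m-1)$ with $\eta_m\in\mu_{p^\infty}$ and $y_m=\gamma^{s_m}\zeta'_m$ a one-variable special point; coefficients live in a finite extension $\IZ_p[\xi]$ once the torsion coordinate is specialized.

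I would then split on the number of distinct values among the $\eta_m$. If only finitely many occur, some $\eta_0$ is attained infinitely often, and since $\Psi$ is injective the corresponding $y_m$ give infinitely many distinct points of the open unit disk at which the one-variable slice $\tilde L(\eta_0-1,Y)\in\IZ_p[\xi][[Y]]$ vanishes. By Weierstrass preparation a nonzero such series has only finitely many zeros in $|Y|<1$, so the slice vanishes identically; transporting $\{Z=\eta_0-1\}$ back through $\Psi^{-1}$ and the $\gamma^{c_0}$-translation exhibits a positive-dimensional formal subtorus translate, through one of the $P_m\in\Sigma$, on which $L_p$ vanishes. This is alternative (2).

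The crux is the opposite case, where infinitely many distinct cosets $\eta_m$ occur: the first coordinates $\eta_m-1$ are then genuinely torsion and distinct, but the second coordinates $y_m=\gamma^{s_m}\zeta'_m$ stay non-torsion because the weight direction carries a nontrivial power of $\gamma$ that no single isogeny can kill simultaneously with $\chi$. This is exactly why classical Manin--Mumford does not apply on the nose and one must invoke the $p$-adic rigidity of Theorem \ref{thm:JTNBresult}, including its Tate--Voloch strength. Applying Weierstrass preparation to $\tilde L$ in the $Y$-variable yields finitely many root branches $Y=\rho_j(Z)$ with each $y_m-1$ equal to $\rho_{j_m}(\eta_m-1)$, and, after fixing a branch along a subsequence and taking $p$-adic logarithms, the relation $s_m\log\gamma=\log(1+\rho_j(\eta_m-1))$ binding the integer weights $s_m$ to the torsion points $\eta_m$. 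The content of Theorem \ref{thm:JTNBresult} is that such an infinite coincidence cannot persist unless $\rho_j$ is itself a character (subtorus) branch, i.e.\ unless $\tilde L$ already vanishes along a torsion translate of a positive-dimensional subtorus; concretely, sorting by $v_p(s_m)$, either $v_p(s_m)\to\infty$ along a subsequence, so $y_m$ approaches $\zeta'_m$ and the torsion points $(\eta_m-1,\zeta'_m-1)$ accumulate on the zero locus of $\tilde L$, whence the Tate--Voloch form of Theorem \ref{thm:JTNBresult} forces a positive-dimensional special subscheme, or $v_p(s_m)$ stays bounded and the rigidity of the analytic relation gives the same outcome. Either way $\tilde L$ vanishes on a torsion translate of a positive-dimensional subtorus, and pulling back by $\Psi^{-1}$ and the $\gamma^{c_0}$-translation produces the translate by an element of $\Sigma$ required for (2). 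The main obstacle is precisely this last case: taming the persistent non-torsion weight coordinate, where one genuinely needs the extension of Manin--Mumford from torsion points to the special points $\gamma^{s}\zeta$ supplied by the cited work of the second author.
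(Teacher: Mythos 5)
Your overall architecture --- pigeonhole on the character values, a $\GL_2(\IZ_p)$ change of coordinates making one coordinate torsion, then feeding approximate torsion points into Theorem \ref{thm:JTNBresult} --- is sound, and your Case A (finitely many distinct $\eta_m$, handled by Weierstrass preparation in one variable) and Case B1 ($v_p(s_m)\to\infty$, handled by the Tate--Voloch part of Theorem \ref{thm:JTNBresult}) are essentially correct; B1 is in fact the same mechanism the paper uses. The genuine gap is Case B2, which you yourself flag as ``the main obstacle'' and then do not prove. When the exponents $s_m$ in the second coordinate have valuation bounded by $B$, the zeros $(\eta_m-1,\gamma^{s_m}\eta'_m-1)$ stay at distance at least $p^{-(B+1)}$ from every torsion point of $\widehat{\IG}_m^2$, so the Tate--Voloch statement cannot be invoked, and ``the rigidity of the analytic relation gives the same outcome'' is an assertion, not an argument. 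The missing idea --- and it is precisely the central trick of the paper's proof, borrowed from \cite[Proposition 3.1]{SERBAN2022405} --- is compactness of $\IZ_p$: pass to a subsequence along which $s_m\to S\in\IZ_p$, recenter by the change of variables $T\mapsto\gamma^{S}(1+T)-1$, after which the new exponents $s_m-S$ tend to $0$ $p$-adically and you are back in your Case B1; the torsion translate found for the recentered series pulls back to the required translate for $\tilde L$. (The paper performs this recentering once, in both coordinates simultaneously with a limit exponent $K$, which is why it needs no case split at all.) Without this step the proposition is not established.

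A secondary problem: the Weierstrass-preparation discussion opening your Case B is both unnecessary and incorrect as stated. Preparation of $\tilde L$ in the $Y$-variable over $\IZ_p[[Z]]$ requires $Y$-regularity (some coefficient of $Y^j$ must be a unit power series), which need not hold; the roots of the resulting distinguished polynomial are not in general analytic ``branches'' $\rho_j(Z)$ that can be evaluated at $Z=\eta_m-1$ coherently across $m$; and Theorem \ref{thm:JTNBresult} makes no statement about logarithmic relations of the form $s_m\log\gamma=\log\bigl(1+\rho_j(\eta_m-1)\bigr)$ --- it concerns torsion points lying on, or $p$-adically near, $\Spf(A)$. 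That material should be deleted and replaced by the compactness-and-recentering step above, which handles B1 and B2 uniformly.
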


 Here $X_*(\widehat{\IG}_m^2)=\Hom_{frm.grp}(\widehat{\IG}_m^2,\widehat{\IG}_m)$
denotes the (formal) characters on the formal torus.  In this case, these characters just correspond to taking $p$-adic integer exponents in each coordinate and the group is thus isomorphic to $\IZ_p^2$. Moreover, the formal subtori correspond to kernels of $\IZ_p$-submodules of $\IZ_p^2$ under this identification (see Lemma \ref{lemma:subtoriexplicit}).

\begin{remark}
From a rigidity point of view, requiring for some character $\chi$ that the image $\chi(\gamma^k-1,\gamma^s-1)$ is a finite set as in Proposition \ref{prop:mainrigidity} is necessary:\ indeed one can construct formal power series (see \cite[Proposition 2.11]{Serbanunlikely}) such that 
$$\varphi(\gamma^k-1,\gamma^s-1)=0 \text{ for an infinite set }S=\{(k,s)\}\subset\IZ^2,$$
and such that there is no character in $ X_*(\widehat{\IG}_m^2)$ vanishing on infinitely many elements with exponents in $S$. In particular $\varphi$ does not vanish along a subtorus translate. 
\end{remark}
Fortunately, it suffices for our purposes to mainly consider exponents on the line $s=k/2$ and the rigidity methods then allow us to conclude the main result of this section:\ 

\begin{theorem}
\label{thm:mainrigidity}  
Consider one of the finitely many branches of a two-variable $p$-adic $L$-function expanded as $L_{p,i}(w,T)\in \IZ_p[[w,T]]$. Assume $L_{p,i}$ vanishes at infinitely many critical $L$-values for the interpolated cuspidal eigenforms. Then $L_{p,i}(w,T)$ must be, possibly over a finite extension $\IZ_p[\xi]$, divisible by a factor of the form
$$\gamma^{K/2-KN}(w+1)^N-\xi(T+1)\text{ or }\gamma^{KN/2-K}(w+1)-\xi(T+1)^N$$ for some fixed $N,K\in \IZ_p$ and $\xi \in \mu_{p^\infty}$. 
\end{theorem}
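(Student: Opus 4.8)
The plan is to feed the central vanishing locus into Proposition \ref{prop:mainrigidity} and then to read off the forced subtorus explicitly. Write $(U,V)=(w+1,T+1)$ for the multiplicative coordinates on $\widehat{\IG}_m^2$. The special point attached to $f_{k,\epsilon}$, a cyclotomic character $\chi_\zeta$, and a critical integer $j$ has $(U,V)=(\gamma^{k-2}\zeta_1,\gamma^{j-1}\zeta_2)$ with $\zeta_1=\epsilon(\gamma)$ and $\zeta_2=\chi_\zeta(\gamma)$ in $\mu_{p^\infty}$. By Proposition \ref{prop:onlycentralvanishing} any vanishing occurs only at $j=k/2$, so the infinitely many vanishing points all have $\gamma$-exponents $(k-2,j-1)$ on the line $\IZ_p\cdot(2,1)$. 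Taking $\Sigma$ to be this central locus, I would check that the character $\chi=(1,-2)\in X_*(\widehat{\IG}_m^2)$, given by $(U,V)\mapsto UV^{-2}$, satisfies the finiteness hypothesis of Proposition \ref{prop:mainrigidity}: on the $\gamma$-exponent lattice of $\Sigma$ it sends $(k-2,j-1)$ to $\gamma^{(k-2)-2(j-1)}-1=0$.

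Next I would invoke Proposition \ref{prop:mainrigidity}. As $L_{p,i}$ vanishes at infinitely many points of $\Sigma$, alternative (1) is impossible, so we are in alternative (2): $L_{p,i}$ vanishes along a translate $\sigma\cdot S$ of a positive-dimensional formal subtorus $S$ by some $\sigma\in\Sigma$ (we may assume $L_{p,i}\neq 0$, so $S$ is one-dimensional). By Lemma \ref{lemma:subtoriexplicit} I write $S$ as the image of a primitive cocharacter $(a,b)\in\IZ_p^2$, i.e.\ $S=\{(t^a,t^b)\}$.

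To pin down $(a,b)$, let $\ell$ record the exponent of $\gamma$ on points of the form $\gamma^c\cdot(\text{root of unity})$. Each central vanishing point $P=(\gamma^{2s}\zeta_1,\gamma^{s}\zeta_2)$ lying on $\sigma\cdot S$ can be written $P=\sigma\cdot(t^a,t^b)$; since $(a,b)$ is primitive, $t$ is again of the form $\gamma^c\cdot(\text{root of unity})$, whence $\ell(P)-\ell(\sigma)\in\IZ_p\cdot(a,b)$. As $\ell(P)=(2s,s)$ ranges over infinitely many points of $\IZ_p\cdot(2,1)$, matching the coefficients of $s$ forces $a=2b$, hence $(a,b)=(2,1)$ by primitivity, and also $\ell(\sigma)\in\IZ_p\cdot(2,1)$, so $\sigma$ is central. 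Writing $\sigma=(\gamma^{2\tau_0}\xi_1,\gamma^{\tau_0}\xi_2)$, the defining relation $U=V^2$ of $S$ becomes $UV^{-2}=\xi_1\xi_2^{-2}=:\xi\in\mu_{p^\infty}$ on $\sigma\cdot S$---the powers of $\gamma$ cancel---so $\sigma\cdot S=V(\phi)$ with $\phi=(w+1)-\xi(T+1)^2$. This is exactly the second displayed factor with $N=2$ (so that the exponent $KN/2-K$ of $\gamma$ vanishes); the first displayed shape $(w+1)^{1/2}-\xi'(T+1)$ is the reciprocal description of the same subtorus, a genuine element of $\IZ_p[\xi'][[w,T]]$ when $p$ is odd.

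Finally I would convert vanishing along $\sigma\cdot S$ into divisibility. Over $\IZ_p[\xi]$ the element $\phi=(w+1)-\xi(T+1)^2=w-g(T)$ with $g(T)=\xi(T+1)^2-1$ is a distinguished polynomial of degree one in $w$ (its constant term $\xi-1$ lies in $\mathfrak{m}$ because $\xi$ is a $p$-power root of unity), hence prime, and $\IZ_p[\xi][[w,T]]$ is a unique factorization domain; thus $L_{p,i}\in(\phi)$ gives $\phi\mid L_{p,i}$. The crux is the third step: extracting the exact cocharacter from the Tate--Voloch/Manin--Mumford output of Proposition \ref{prop:mainrigidity} and verifying that the $\gamma$-powers cancel so that the translate is cut out over $\IZ_p[\xi]$ by a genuine root of unity $\xi$ and by the reduced, primitive equation that then divides $L_{p,i}$.
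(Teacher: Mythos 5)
Your first two steps (reducing to central values via Proposition \ref{prop:onlycentralvanishing}, checking the character hypothesis with $UV^{-2}$, and invoking Proposition \ref{prop:mainrigidity}) are exactly the paper's, and your final Weierstrass step is a workable substitute for Lemma \ref{lemma:subtoritranslate}. The gap is the third step, where you pin the cocharacter down to $(a,b)=(2,1)$ and conclude that the powers of $\gamma$ cancel. Proposition \ref{prop:mainrigidity}(2) asserts only that $L_{p,i}$ vanishes along \emph{some} translate $\sigma\cdot S$ of a positive-dimensional subtorus by an element of $\Sigma$; it does \emph{not} assert that the infinitely many vanishing points of $\Sigma$ lie on $\sigma\cdot S$. (Indeed, in the proof of that proposition, when the roots of unity involved have unbounded order, the translate is extracted from the auxiliary locus $\mathcal{T}$ of Theorem \ref{thm:JTNBresult}(2), which the vanishing points of $\Sigma$ merely approach $p$-adically.) So your computation ``$\ell(P)-\ell(\sigma)\in\IZ_p\cdot(a,b)$ as $\ell(P)=(2s,s)$ ranges over infinitely many points'' has no points $P$ to run over. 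Moreover, even if infinitely many vanishing points did lie on the translate, they could all share a single weight (one value of $s$), in which case matching coefficients of $s$ gives no constraint on $(a,b)$ at all.

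The conclusion you reach (slope forced to be $(2,1)$, so a factor $(w+1)-\xi(T+1)^2$ with no power of $\gamma$) is strictly stronger than the theorem, and it is false; this is precisely why the paper's statement keeps a general $N\in\IZ_p$ and the constants $\gamma^{K/2-KN}$, $\gamma^{KN/2-K}$. Consider
$$\phi(w,T)=(w+1)^p-(T+1)^{3p}=\prod_{\xi\in\mu_p}\bigl((w+1)-\xi(T+1)^3\bigr).$$
For every $\zeta\in\mu_{p^\infty}$ and $\xi\in\mu_p$, the point $(\zeta^3\xi-1,\zeta-1)$ is, in your normalization, the central critical point attached to weight $k=2$, $j=1=k/2$, nebentype sending $\gamma\mapsto\zeta^3\xi$ and cyclotomic character sending $\gamma\mapsto\zeta$; thus $\phi$ vanishes at infinitely many points of $\Sigma$. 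Yet the prime factors of $\phi$ are translates of the subtorus of slope $(1,3)$, and $\phi$ has no factor of the form $(w+1)-\xi(T+1)^2$ (two such factors share only finitely many zeroes). Here the infinitude of vanishing is carried entirely by the root-of-unity coordinates at one fixed weight, which is exactly the situation your $\gamma$-exponent bookkeeping cannot detect. The correct endgame is the paper's Lemma \ref{lemma:subtoritranslate}: Proposition \ref{prop:mainrigidity} lets one take the translation point central, i.e.\ of the form $(\gamma^{K}\zeta-1,\gamma^{K/2}\zeta'-1)$, but the subtorus direction $N$ stays arbitrary, and absorbing the central translate into the equation of the subtorus is what produces the factors $\gamma^{K/2-KN}(w+1)^N-\xi(T+1)$ and $\gamma^{KN/2-K}(w+1)-\xi(T+1)^N$ of the statement.
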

\begin{proof}
First, by Proposition \ref{prop:onlycentralvanishing}, critical $L$-values only vanish at central values and thus we have
 $L_{p,i}(\gamma^k\zeta-1,\gamma^{k/2}\zeta'-1)=0$ for infinitely many triplets $(k,\zeta,\zeta')\in 2\IN\times\mu_{p^\infty}^2$. We may then apply Proposition \ref{prop:mainrigidity} to obtain that $L_{p,i}(w,T)$ vanishes along the translate of a one-dimensional formal subtorus of $\widehat{\IG}_m^2$ by a point $(\gamma^K\zeta-1,\gamma^{K/2}\zeta'-1)$ for some fixed $(K,\zeta,\zeta')\in \IZ_p\times\mu_{p^\infty}^2$. This forces the divisibility of $L_{p,i}$ by a factor of the prescribed form (see Lemma \ref{lemma:subtoritranslate}). 
\end{proof}

Before turning to the proof of Proposition  \ref{prop:mainrigidity}, we highlight that this result implies that an infinite amount of vanishing triggers vanishing of critical values along every point on a torus translate. Such a regular pattern of vanishing is not to be expected unless it occurs due to the functional equation. We exploit this in Section \ref{sec:excluding} to establish criteria for checking only finite vanishing occurs in concrete families.

\begin{proof}[Proof of Proposition \ref{prop:mainrigidity}]
Assume $L_p$ vanishes along infinitely many points in $\Sigma$. By a pigeonhole principle we easily reduce to the case when the image of the character $\chi(\gamma^k-1,\gamma^s-1)$ over $s,k$ varying in $\Sigma$ is a singleton, so that $s$ and $k$ satisfy a $\IZ_p$-linear relation. We prove a slightly stronger statement where $s,k\in \IZ_p$ and are allowed to vary freely along this line. After a possible translation and since the statement is then symmetric in both variables we may assume that
$$L_p(\gamma^k\zeta-1,\gamma^{k\cdot U}\zeta'-1)=0$$
for infinitely many triplets $(\zeta, \zeta',k)$ where $k\in\IN$, $\zeta,\zeta'\in\mu_{p^\infty}$ and $U\in \IZ_p$ is fixed. We shall use the following trick employed in \cite[Proposition 3.1]{SERBAN2022405} to conclude using Theorem \ref{thm:JTNBresult}:\ set $\Sigma_U:=\{(\zeta, \zeta',k)~\vert~ L_p(\gamma^k\zeta-1,\gamma^{kU}\zeta'-1)=0\}$. We may assume that the set of exponents $k$ in $\Sigma_U$ is infinite, since otherwise we may reduce to the first part of Theorem \ref{thm:JTNBresult}. Then by compactness, we may assume these weights $k$ converge $p$-adically to some $K\in \IZ_p$ after possibly passing to a subsequence. After a change of coordinates sending $w\mapsto \gamma^K(1+w)-1$ and $T\mapsto \gamma^{KU}(1+T)-1$ we may therefore assume the exponents $(k,kU)$ converge $p$-adically to zero. When these exponents have large enough $p$-adic valuation, one therefore obtains that the points $(\gamma^k\zeta-1,\gamma^{kU}\zeta'-1)$ $p$-adically approach the torsion points $(\zeta-1,\zeta'-1)$ of the formal group $\widehat{\IG}_m^2(\overline{\IQ}_p)^{tor}$ in the $p$-adic topology. In other words, the changes of variables yield a formal power series $\phi\in\IZ_p[[w,T]]$, together with, for any $\varepsilon>0$, a set $\Sigma_\varepsilon$ of triplets $(\zeta, \zeta',k)$ with infinitely many weights $k$, such that for a given choice of $p$-adic absolute value we have:\ 
$$\vert\phi(\zeta-1,\zeta'-1)\vert_p\leq \varepsilon \text{ for all }(\zeta, \zeta',k)\in\Sigma_\varepsilon$$
by continuity of $\phi$. Consider now the second statement of Theorem \ref{thm:JTNBresult} as applied to $n=2$ and $I=(\phi)$. Fixing a small enough $\varepsilon>0$ we conclude that either $\phi$ vanishes along a one-dimensional formal subtorus translate or the projection to the first two coordinates of $\Sigma_\varepsilon$ is finite, so that the roots of unity in $\Sigma_\varepsilon$ have bounded order $p^m$ for some $m$. In the latter case it follows that $\phi$ vanishes along $(w,T)=(\gamma^k\zeta-1, \gamma^{kU}\zeta'-1)$ for infinitely many $k$ and fixed $\zeta,\zeta'\in\mu_{p^m}$ and this set of points is therefore Zariski-dense in the formal torus translate $\Spec(\IZ_p[\zeta,\zeta' ][[w,T]]/((1+w)^U-(1+T)\zeta^{-1}\zeta')$. We conclude in either case that $\phi$ vanishes along the translate of a positive dimensional formal subtorus by a point in $\Sigma$. Tracing back through our changes of variables, the same is true for $L_p$, concluding the proof. 
\end{proof}
Finally, we record the auxiliary lemmas:
\begin{lemma}\label{lemma:subtoriexplicit}
Let $X_*(\widehat{\IG}_m^n):=\Hom_{frm.grp}(\widehat{\IG}_m^2,\widehat{\IG}_m)\cong \IZ_p^2$ denote the (formal) characters on the formal torus. Then any (closed, affine) formal subtorus of positive dimension is the vanishing locus of a $\mathbb{Z}_p$-submodule $M$ of $X_*(\widehat{\IG}_m^2)$. 
Moreover, if $\phi\in \IZ_p[[w,T]]$ vanishes along a full one-dimensional formal subtorus of $\widehat{\IG}_m^2$, then $\phi$ is divisible by $(w+1)^N-(T+1)$ or $(w+1)-(T+1)^N$ for some $N\in\IZ_p$.
\end{lemma}
\begin{proof}
For the first statement see e.g. \cite[Lemma 2.8.]{Serbanunlikely}. This then implies that if $\phi\in \IZ_p[[w,T]]$ vanishes along a full one-dimensional formal subtorus of $\widehat{\IG}_m^2$, there exist $(M,N)\in \IZ_p^2$, not both zero, such that up to coordinate swap $\phi(w,T)$ and $(w+1)^M(T+1)^N-1$ have infinitely many zeroes in common. Working up to units and restricting to the connected component of $\phi$ passing through the origin, it is easy to see that one of the $p$-adic exponents can be eliminated, and the result follows from the irreducibility of $(w+1)^N-(T+1)$ over $\IZ_p$. Irreducibility in turn follows easily from the linearity in one of the variables. 
\end{proof}
\begin{lemma}\label{lemma:subtoritranslate}
If $\phi\in \IZ_p[[w,T]]$ vanishes along a translate of a one-dimensional formal subtorus of $\widehat{\IG}_m^2$ by a point $(\gamma^{K_1}\zeta-1,\gamma^{K_2}\zeta'-1)$ for fixed $K_1,K_2\in\IZ_p$ and $\zeta,\zeta'\in \mu_{p^\infty} $, then $\phi$ is divisible (over a finite extension $\IZ_p[\xi]$) by a factor given by $$\gamma^{K_2-K_1N}(X+1)^N-\xi(Y+1)\text{ or }\gamma^{K_2N-K_1}(X+1)-\xi(Y+1)^N$$
for some fixed $N\in \IZ_p$ and $\xi\in \mu_{p^\infty}$.
\end{lemma}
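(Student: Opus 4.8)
The plan is to reduce the statement to the untranslated case already settled in Lemma \ref{lemma:subtoriexplicit}, by conjugating $\phi$ by a translation on the formal torus, and then to translate back while tracking the resulting factor. First I would record the translating point: writing $a=\gamma^{K_1}\zeta-1$ and $b=\gamma^{K_2}\zeta'-1$, the elements $a+1=\gamma^{K_1}\zeta$ and $b+1=\gamma^{K_2}\zeta'$ are principal units of $\calO:=\IZ_p[\zeta,\zeta']$ (that is, congruent to $1$ modulo the maximal ideal), since $\gamma\in 1+q\IZ_p$ and $\zeta,\zeta'$ are $p$-power roots of unity. This is exactly what makes the multiplicative substitution below a well-defined continuous automorphism, and it is the only place the special shape of the translating point enters.

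Next I would form
$$\psi(w,T):=\phi\bigl((w+1)(a+1)-1,\ (T+1)(b+1)-1\bigr),$$
the composite of $\phi$ with translation by $(a,b)$ under the group law $\calF$. Since $a+1,b+1$ are principal units, $\psi\in\calO[[w,T]]$ is well defined, and the map is an automorphism of $\calO[[w,T]]$ whose inverse is translation by the inverse point. Because $\phi$ vanishes on the translate by $(a,b)$ of a one-dimensional subtorus $H$, the series $\psi$ vanishes on $H$ itself. I would then invoke Lemma \ref{lemma:subtoriexplicit} over $\calO$ — its proof only uses irreducibility of a factor linear in one variable, so it applies verbatim over this ring of integers — to conclude that, up to a coordinate swap, $\psi$ is divisible by $(w+1)^N-(T+1)$ for some $N\in\IZ_p$.

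Finally I would translate back. As the substitution is an automorphism, $\phi$ arises from $\psi$ by $w\mapsto(w+1)(a+1)^{-1}-1$ and $T\mapsto(T+1)(b+1)^{-1}-1$, and divisibility is preserved. Carrying out this substitution on $(w+1)^N-(T+1)$ and factoring out the unit $(a+1)^{-N}$ gives
$$(a+1)^{-N}\Bigl[(w+1)^N-\gamma^{K_1N-K_2}\zeta^N\zeta'^{-1}(T+1)\Bigr],$$
using $a+1=\gamma^{K_1}\zeta$ and $b+1=\gamma^{K_2}\zeta'$; multiplying the bracket by the unit $\gamma^{K_2-K_1N}$ turns it into exactly $\gamma^{K_2-K_1N}(w+1)^N-\xi(T+1)$ with $\xi:=\zeta^N\zeta'^{-1}\in\mu_{p^\infty}$. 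Since all prefactors are units, $\phi$ is divisible by this factor, and the coordinate-swapped case yields the second factor $\gamma^{K_2N-K_1}(w+1)-\xi(T+1)^N$ with $\xi=\zeta\zeta'^{-N}$ by the identical bookkeeping.

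The substance of the argument is entirely this change-of-variables bookkeeping; the step needing the most care is confirming that the translation is a genuine automorphism of $\calO[[w,T]]$ (so divisibility transports in both directions) and that the final divisibility, established a priori over the cyclotomic ring $\calO$, descends to $\IZ_p[\xi]$. The latter holds because the factor is linear in one of the variables with unit leading coefficient: dividing $\phi$ by it with remainder already takes place over $\IZ_p[\xi]$, and the remainder, which vanishes over $\calO$, must therefore already vanish.
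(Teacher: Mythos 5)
Your proposal is correct and follows essentially the same route as the paper: both arguments reduce to Lemma \ref{lemma:subtoriexplicit} and then perform the same multiplicative bookkeeping with the units $\gamma^{K_1}\zeta$ and $\gamma^{K_2}\zeta'$, arriving at the identical factors with $\xi=\zeta'^{-1}\zeta^N$ (resp.\ $\xi=\zeta'^{-N}\zeta$); the paper packages this by writing the translated character equation directly and invoking irreducibility plus Zariski closure, whereas you conjugate $\phi$ by the translation automorphism of $\calO[[w,T]]$ and transport divisibility back, which is the same idea in slightly different clothing. A small bonus of your write-up is that you explicitly justify the descent of the divisibility from $\IZ_p[\zeta,\zeta']$ to $\IZ_p[\xi]$ via uniqueness of division by a factor linear in one variable, a point the paper's ``working up to units'' phrasing leaves implicit.
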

\begin{proof}
By Lemma \ref{lemma:subtoriexplicit}, since $\phi$ vanishes along the translate of a formal subtorus of positive dimension, $\phi$ has infinitely many zeroes in common with a power series of the form $(\gamma^{-K_1}\zeta^{-1} (X+1))^N-\gamma^{-K_2}\zeta'^{-1}(1+X)$ or $\gamma^{-K_1}\zeta^{-1} (X+1)-(\gamma^{-K_2}\zeta'^{-1}(1+X))^N$ for some $N\in \IZ_p$. Working up to units, we may rewrite these power series as 
$$\gamma^{K_2-K_1N}(X+1)^N-\xi(Y+1)\text{ or }\gamma^{K_2N-K_1}(X+1)-\xi(Y+1)^N,$$
where $\xi$ is a fixed $p$-power root of unity given by $\zeta'^{-1}\zeta^N$ or $\zeta'^{-N}\zeta$, respectively. Since the resulting power series are again irreducible, the result follows taking Zariski closures. 

\end{proof}

\begin{remark}
\label{rmk:bigrank}
We have by and large so far assumed that the Hida-Hecke algebra $\IT$ has rank one over the Iwasawa algebra, which already covers many examples and ensures branches of the two-variable $p$-adic $L$-function can be expanded as a power series $L_{p,i}\in \IZ_p[[w,T]]$. In general, we have a finite flat map $\Spec(\IT)\to \Spec(\Lambda)$ which is \'etale at the classical weights and a branch of the two-variable $p$-adic $L$-function $\mathcal{L}_p$ may be viewed as an element of the completed group ring $\IT[[1+p\IZ_p]]\cong \IT[[T]]$ which to a prime $f_{k,\epsilon}$ above a classical weight associates the $p$-adic $L$-function $L_p(f_{k,\epsilon})$. The rigidity results of this section then again imply that any infinite amount of vanishing for $\mathcal{L}_p$ has to occur above some formal subtorus translate in $\Spec(\Lambda[[T]])$, i.e.\ along a very regular pattern. By a Zariski--closure argument, one can then for concrete $\IT$ lift this to a component of $\mathcal{L}_p$ and obtain a regular vanishing pattern that in theory could be excluded computationally. However, in practice there would be additional obstacles in computing with overconvergent modular symbols in this more general setting. 
\end{remark}

\section{Excluding  toroidal factors}\label{sec:excluding}
The rigidity results above imply that if an infinite amount of vanishing occurs in our specified set, our power series needs to be divisible by a factor of the form $D(1+X)^N-\xi(Y+1)$ or $D(1+X)-\xi(Y+1)^N$ for a constant $D\equiv 1 \pmod p$. We will refer to these factors as {\it toroidal} factors (as they arise from vanishing along a translate of a subtorus).  In order to prove the non-vanishing of the classical $L$-values interpolated by two-variable $p$-adic $L$-functions (up to a finite set), we show how such toroidal factors can be excluded computationally. 
\subsection{First method:\ low-degree coefficients}
Assume some branch of a two-variable $p$-adic $L$-function $L_{p,i}(w,T) \in \IZ_p[[w,T]]$ vanishes at the origin and we wish to exclude divisibility by a toroidal factor
(to conclude by Theorem \ref{thm:mainrigidity} that only finitely many of the classical $L$-values interpolated by $L_{p,i}(w,T)$ vanish).  The accuracy of our computer computations will yield a power series expansion for $L_{p,i}(w,T)$ modulo some power of the maximal ideal $\im=(p,X,Y)$. The one-dimensional formal subtori of $\widehat{\IG}_m^2$ sweep through the entire (co)tangent space of $\IZ_p[[X,Y]]$, however that is no longer true for $\im^i/\im^{i+1}$ and $i\geq 2$. This means that in favorable situations toroidal factors can be excluded by explicit computation. In particular, we shall use:\ 

\begin{lemma}\label{lemma:exclude}
Let $p$ be a prime and 
$$
\varphi(X,Y) = a_{X^2} X^2 + a_{XY} XY + a_{Y^2} Y^2 + \dots  \in (X,Y)^2 \cdot\IZ_p[[X,Y]].
$$
Assume
\begin{enumerate}
    \item \label{part:cond1} exactly two out of the three quadratic coefficients $a_{X^2}$, $a_{XY}$, and $a_{Y^2}$ are divisible by $p$,
    \item \label{part:cond2} $\varphi(X,0)\not\equiv 0 \mod (p,X^{p+1})$,
    \item \label{part:cond3} $\varphi(0,Y)\not\equiv 0 \mod (p,Y^{p+1})$.
\end{enumerate}
Then $\varphi$ does not vanish along a translate of a formal torus in the sense that $\varphi(X,Y)$ is not divisible by $D (X+1)^N-\xi\cdot(Y+1)$ or $D(X+1)-\xi\cdot(Y+1)^N$ for $D,N\in \IZ_p$, $D \equiv 1 \pmod{p}$, and $\xi$ a $p$-power root of unity.
\end{lemma}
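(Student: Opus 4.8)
The plan is to argue by contradiction: suppose $\varphi$ is divisible by a toroidal factor. By the symmetry $X \leftrightarrow Y$ (which interchanges the two admissible shapes of factor, and also interchanges hypotheses (\ref{part:cond2}) and (\ref{part:cond3})), it suffices to treat a factor of the first type, $g = D(X+1)^N - \xi(Y+1)$ with $D \equiv 1 \pmod p$. First I would pass to the residue field by reducing the relation $\varphi = g\cdot h$ (with $h \in \Zp[\xi][[X,Y]]$) modulo the maximal ideal $\mathfrak{m}$ of $\Zp[\xi]$, whose residue field is $\IF_p$; since reduction is a ring homomorphism this yields $\bar\varphi = \bar g\,\bar h$. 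As $D \equiv 1$ and $\xi \equiv 1 \pmod{\mathfrak{m}}$, the reduction of $g$ is
$$\bar g(X,Y) = (1+X)^N - (1+Y) = -\bigl(Y - u(X)\bigr), \qquad u(X) := (1+X)^N - 1 \in \IF_p[[X]],$$
where $(1+X)^N$ is computed in $\IF_p[[X]]$. This is, up to the unit $-1$, monic of degree one in $Y$, so dividing $\bar\varphi$ by $Y - u(X)$ leaves the remainder $\bar\varphi(X,u(X))$, and $\bar g \mid \bar\varphi$ forces the single identity $\bar\varphi\bigl(X, u(X)\bigr) = 0$ in $\IF_p[[X]]$. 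The whole argument then reduces to extracting enough low-degree information from this identity to contradict the hypotheses.

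The key computational input is the order of vanishing of $u$. Writing $N = p^e M$ with $e = \ord_p(N)$ and $M \in \Zpx$ (the case $N=0$ giving $u=0$ and an immediate contradiction with (\ref{part:cond2})), I would use $(1+X)^{p^e} \equiv 1 + X^{p^e} \pmod p$ together with $(1+X)^{p^e M} = \bigl((1+X)^{p^e}\bigr)^M$ to obtain
$$u(X) \equiv (1 + X^{p^e})^M - 1 \equiv \bar M\, X^{p^e} + (\text{higher order}) \pmod p,$$
so that $\ord_X u = p^{e} = p^{\ord_p(N)}$; in particular $u(X) = \bar N\,X + O(X^2)$ when $p \nmid N$.

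The argument now splits according to whether $p \mid N$. If $p \mid N$, then $\ord_X u = p^e \ge p$, so every monomial $\bar a_{ij} X^i u(X)^j$ of $\bar\varphi(X,u(X))$ with $j \ge 1$ has $X$-order at least $1 + p^e > p$ (using $i \ge 1$ when $j = 1$, since $\varphi \in (X,Y)^2$). Hence $\bar\varphi(X, u(X)) \equiv \bar\varphi(X, 0) \pmod{X^{p+1}}$, and the vanishing of the left-hand side gives $\varphi(X,0) \equiv 0 \pmod{(p, X^{p+1})}$, contradicting (\ref{part:cond2}). If instead $p \nmid N$, then $u(X) = \bar N X + O(X^2)$ with $\bar N \ne 0$, and substituting into the quadratic part of $\bar\varphi$ shows that the coefficient of $X^2$ in $\bar\varphi(X, u(X))$ equals
$$\bar a_{X^2} + \bar N\,\bar a_{XY} + \bar N^2\,\bar a_{Y^2},$$
which must therefore vanish in $\IF_p$. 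But hypothesis (\ref{part:cond1}) says exactly one of $\bar a_{X^2}, \bar a_{XY}, \bar a_{Y^2}$ is nonzero; since $\bar N \ne 0$, the displayed expression is then a single nonzero monomial in $\bar N$, a contradiction. The factor of the second type is excluded identically after swapping $X$ and $Y$, using (\ref{part:cond3}) in place of (\ref{part:cond2}) and the symmetric relation $\bar N^2 \bar a_{X^2} + \bar N\, \bar a_{XY} + \bar a_{Y^2} = 0$.

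The step I expect to require the most care is the control of $(1+X)^N \bmod p$ for a general $p$-adic exponent $N$ — justifying $(1+X)^{p^e M} = \bigl((1+X)^{p^e}\bigr)^M$ by continuity of the binomial power and pinning down $\ord_X u = p^{\ord_p N}$ — since the clean split into the two regimes (and hence the precise roles of the three hypotheses) hinges on it. Everything else is bookkeeping of low-degree terms, and I would double-check that no higher-order terms of $u$ contaminate the $X^2$-coefficient in the transverse case ($p \nmid N$) nor the truncation modulo $X^{p+1}$ in the tangent case ($p \mid N$).
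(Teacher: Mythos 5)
Your proof is correct, and it reaches the conclusion by a route that differs structurally from the paper's. The paper argues in characteristic zero and splits into two cases: for $D=\xi=1$ it expands $\varphi=((1+X)^N-(1+Y))\cdot(a+bX+cY+\cdots)$ modulo $(X,Y)^3$, reads off the quadratic coefficients $(bN,\,cN-b,\,-c)$, notes that for $p\nmid N$ it is impossible for exactly two of these to vanish modulo $p$, and for $p\mid N$ derives $\varphi(X,0)\equiv 0\pmod{(p,X^{p+1})}$ against \eqref{part:cond2} (or \eqref{part:cond3} after swapping); then, when $(D,\xi)\neq(1,1)$, it gives a separate short argument: the constant term $D-\xi$ of the factor is a nonzero non-unit, forcing the cofactor into $(X,Y)^2$ and hence all three quadratic coefficients of $\varphi$ to be divisible by $p$, against \eqref{part:cond1}. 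Your reduction modulo the maximal ideal of $\IZ_p[\xi]$ at the very start collapses $D$ and $\xi$ to $1$, so this second case never arises; and replacing the undetermined-cofactor expansion by the Weierstrass-division criterion $\bar\varphi(X,u(X))=0$ with $u=(1+X)^N-1$ packages the divisibility into a single identity in $\IF_p[[X]]$, from which your two cases (truncation modulo $X^{p+1}$ when $p\mid N$; the relation $\bar a_{X^2}+\bar N\,\bar a_{XY}+\bar N^2\,\bar a_{Y^2}=0$ when $p\nmid N$) are read off. The case division and the roles of the three hypotheses are the same in both proofs---indeed, with the paper's parametrization one checks $a_{X^2}+N a_{XY}+N^2 a_{Y^2}=bN+N(cN-b)+N^2(-c)=0$, which is exactly your tangent-direction relation---but your version is more uniform, while the paper's treatment of the general $(D,\xi)$ case is shorter and isolates the role of the non-unit $D-\xi$. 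The one step you flagged, namely $(1+X)^{p^eM}=\bigl((1+X)^{p^e}\bigr)^M$ for $M\in\IZ_p$ and $\ord_X\bar u=p^{\ord_p N}$, is a standard fact (continuity of $N\mapsto(1+X)^N$ on $\IZ_p$, or directly $\ord_p\binom{N}{k}>0$ for $1\le k<p^e$ while $\binom{N}{p^e}\in\IZ_p^\times$), so it leaves no gap.
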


\begin{proof}
We begin with the case of $D=\xi=1$ and we wish to exclude a factor of the form $(1+X)^N-(Y+1)$ up to a swapping of $X$ and $Y$.  Assuming such a factor exists, we have
\begin{align*}
\varphi(X,Y) &\equiv ((1+X)^N-(Y+1))\cdot (a+bX+cY) \pmod{(X,Y)^3} \\
 &\equiv (NX + N(N+1)/2 \cdot X^2 - Y)\cdot (a+bX+cY) \pmod{(X,Y)^3} \\
 &\equiv aNX - aY + (aN(N+1)/2 + bN )X^2 + (cN-b)XY + -cY^2 \pmod{(X,Y)^3}.
\end{align*}
As we assumed $\varphi(X,Y) \in (X,Y)^2$, we have $a=0$ and the above equation simplifies to
$$
\varphi(X,Y) \equiv bN X^2 + (cN-b)XY + -cY^2 \pmod{(X,Y)^3}.
$$
Observe that when $p\nmid N$, it is impossible that \emph{exactly} two of the three coefficients vanish modulo $p$. We may therefore assume that $p\mid N$ and we have
\begin{align*}
\varphi(X,0) &= ((1+X)^N-1) \cdot (bX + \dots) 
\equiv \left(\binom{N}{p}X^p + \dots\right) \cdot (bX + \dots) \pmod{p} \\
&\equiv 0 \pmod{(X^{p+1},p)}
\end{align*}
contradicting \eqref{part:cond2}.  The case where $X$ and $Y$ are swapped follows identically except now \eqref{part:cond3} is contradicted.

Moving on to the general case, assume we now have a product (with factors over $\IZ_p[\xi]$):\ 
$$\phi(X,Y) = (D(X+1)^N-\xi\cdot(Y+1))\cdot (a+bX+cY+\ldots).$$
The constant term of $D(X+1)^N-\xi\cdot(Y+1))$ is $D-\xi$ and vanishes if and only if $D=\xi=1$ which is the case already handled.  Thus, we may assume that $D-\xi$ is non-zero which forces $a$ to vanish (as $\phi(0,0)=0$). Thus
$$
\phi(X,Y) = (D-\xi)(bX + cY) \pmod{(X,Y)^2}
$$
which implies that $b=c=0$ as $\phi(X,Y) \in (X,Y)^2$.  But then all of the quadratic coefficients of $\phi(X,Y)$ are  divisible by $D-\xi$.  As $D \equiv 1 \pmod{p}$, we have that $D-\xi$ is not a $p$-adic unit and thus all three of these coefficients are divisible by $p$ which contradicts \eqref{part:cond1}.
\end{proof}

We will see in section \ref{sec:computations} that one can find many examples of two-variable $p$-adic $L$-functions of Hida families whose constant and linear terms vanish.  Further, we will find examples which satisfy all of the hypotheses of Lemma \ref{lemma:exclude}. However, this approach does not allow one to systematically exclude toroidal factors even in these examples.  For instance, in a sample where quadratic coefficients of $L_p(w,T)$ are equidistributed modulo $p$, the probability that exactly 2 of these coefficients are divisible by $p$ is $3(p-1)/p^3$---a ratio that tends to 0 as $p$ grows!  

\subsection{Second method:\ mod $p$ vanishing of specializations}
\label{sec:modp_check}
We now consider a different approach to computationally eliminating toroidal factors which works more generally and more systematically. 

\begin{prop}\label{prop:modpchecking}
Take $\phi(X,Y) \in \Zp[[X,Y]]$ and write $\phi(X,Y) = \sum_{j \geq 0} \phi_j(X) Y^j$. Assume
\begin{enumerate}
    \item \label{part:lambda} $\phi_\lambda(X)$ is a unit power series for some $\lambda \geq 0$,
    \item $\phi(X,Y)$ vanishes at infinitely many points of the form $(\zeta \gamma^{k-2} -1 , \zeta' \gamma^{\frac{k-2}{2}}-1)$ for $k \geq 2$ even and $\zeta,\zeta' \in \mu_{p^\infty}$.
\end{enumerate}
Then there exists $N\in \IZ_p$ and $\xi \in \mu_{p^\infty}$ with multiplicative order bounded above by $\lambda\cdot \frac{p}{p-1}$ such that 
$$
\ord_p \left( \phi(x,y) \right) \geq 1 
$$
for any $x$, $y$ in the open unit disc such that 
$$
(x,y) \equiv (\zeta-1,\xi^{-1} \zeta^N -1) 
\text{~~or~~} (\xi \zeta^N-1,\zeta -1) \pmod{p}
$$
and for \textbf{any} $\zeta \in \mu_{p^\infty}$.
\end{prop}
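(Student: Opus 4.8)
The plan is to convert the analytic hypothesis of infinite vanishing into an algebraic divisibility by a toroidal factor, and then to read off both the claimed mod-$p$ vanishing locus and the bound on the order of $\xi$ from that factor. First I would invoke the rigidity results. The points in hypothesis (2) have first $\gamma$-exponent $k-2$ and second $\gamma$-exponent $\frac{k-2}{2}$, which satisfy the $\IZ_p$-linear relation $(k-2)=2\cdot\frac{k-2}{2}$; hence the character $\chi$ cutting out this line has constant (finite) image and we are exactly in the situation of Proposition \ref{prop:mainrigidity}. Infinite vanishing excludes its alternative (1), so alternative (2) holds and $\phi$ vanishes along a translate of a one-dimensional formal subtorus by a point of $\Sigma$. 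By Lemma \ref{lemma:subtoritranslate} this means $\phi$ is divisible, over some $\IZ_p[\xi]$, by a factor $P=D(X+1)^N-\xi(Y+1)$ or $P=D(X+1)-\xi(Y+1)^N$ with $N\in\IZ_p$, $\xi\in\mu_{p^\infty}$, and $D$ a power of $\gamma$, so that $D\equiv 1\pmod p$. Write $\phi=P\cdot g$ with $g\in\IZ_p[\xi][[X,Y]]$ integral (legitimate since the $Y$-coefficient of $P$ is a unit). I treat the first form of $P$; the second is identical after exchanging $X$ and $Y$ and yields the second congruence family in the statement, so the two forms produce the stated dichotomy.

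The mod-$p$ vanishing is then an immediate computation. Suppose $(x,y)$ lies in the open unit disc with $x\equiv\zeta-1$ and $y\equiv\xi^{-1}\zeta^N-1\pmod p$, so $\ord_p((x+1)-\zeta)\geq 1$. Writing $x+1=\zeta(1+u)$ with $\ord_p(u)\geq 1$ and expanding $(1+u)^N=\sum_{j\geq 0}\binom{N}{j}u^j$ (all $\binom{N}{j}\in\IZ_p$), one gets $(x+1)^N\equiv\zeta^N\pmod p$. Hence
$$P(x,y)=D(x+1)^N-\xi(y+1)\equiv D\zeta^N-\xi\cdot\xi^{-1}\zeta^N=(D-1)\zeta^N\equiv 0\pmod p,$$
since $D\equiv 1\pmod p$ and $\zeta^N$ is a unit. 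As $g(x,y)$ is integral, $\ord_p(\phi(x,y))=\ord_p(P(x,y))+\ord_p(g(x,y))\geq 1$.

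The crux is the bound on the order of $\xi$, and this is where hypothesis (1) is used. Let $p^m$ be the order of $\xi$ and $M=[\IQ_p(\xi):\IQ_p]=p^{m-1}(p-1)$. Since $\phi\in\IZ_p[[X,Y]]$ is fixed by $\Gal(\IQ_p(\xi)/\IQ_p)$ while $P$ is not, applying the Galois group shows $\phi$ is divisible by every conjugate $P^\sigma=D(X+1)^N-\xi^{a_\sigma}(Y+1)$; as these are pairwise coprime in $\IQ_p(\xi)[[X,Y]]$, $\phi$ is divisible over $\IZ_p$ by the Galois-invariant product, say $\phi=\big(\prod_\sigma P^\sigma\big)h$ with $h\in\IZ_p[[X,Y]]$. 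Reducing modulo the maximal ideal of $\IZ_p[\xi]$, both $D$ and every $\xi^{a_\sigma}$ reduce to $1$, so each factor reduces to $(X+1)^N-(Y+1)$ and $\big((X+1)^N-(Y+1)\big)^M$ divides $\bar\phi$ in $\IF_p[[X,Y]]$. Setting $X=0$ gives $\bar\phi(0,Y)=(-Y)^M\,\bar h(0,Y)$, so $\bar\phi_j(0)=0$ for all $j<M$. But hypothesis (1) says $\phi_\lambda$ is a unit, i.e.\ $\bar\phi_\lambda(0)\neq 0$, forcing $\lambda\geq M=p^{m-1}(p-1)$ and therefore $p^m\leq\lambda\cdot\frac{p}{p-1}$. (For the second form of $P$ one sets $X=0$ to find $\bar\phi(0,Y)$ divisible by $(1-(Y+1)^N)^M$, of $Y$-order $M\,p^{\ord_p N}\geq M$; here hypothesis (1) also rules out $N=0$, which would force $X\mid\bar\phi$ and hence $\bar\phi_\lambda(0)=0$.)

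The step I expect to be the main obstacle is exactly this Galois descent: upgrading the divisibility by the single factor $P$ over $\IZ_p[\xi]$---which by itself yields only the vacuous bound $\lambda\geq 1$---to divisibility over $\IZ_p$ by the full product of its $M$ conjugates. This requires verifying that the conjugate factors, although not coprime integrally, are coprime after inverting $p$ and that the resulting quotient $h$ remains integral, a Weierstrass/Gauss-lemma argument resting on the fact that each $P^\sigma$ has a unit among its coefficients. The remaining ingredients---the binomial congruence $(x+1)^N\equiv\zeta^N$, the reduction modulo the maximal ideal, and the extraction of the $Y$-adic order at $X=0$---are routine.
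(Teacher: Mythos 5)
Your proposal is correct, and its first two steps are the paper's own: the paper cites Theorem \ref{thm:mainrigidity} (which packages exactly your combination of Proposition \ref{prop:mainrigidity} and Lemma \ref{lemma:subtoritranslate}) to obtain a toroidal factor $\gamma^A(1+X)^N-\xi(1+Y)$ or $\gamma^A(1+X)-\xi(1+Y)^N$ dividing $\phi$, and then, exactly as you do, uses $\gamma\equiv 1\pmod{p}$ to get divisibility by $p$ at the stated congruence classes. The genuine difference is in the bound on the order of $\xi$. The paper never forms the product of the Galois conjugates: it notes that each conjugate factor divides $\phi$ individually (immediate since $\phi$ has $\IZ_p$-coefficients), so for a fixed $z$ in the open unit disc the one-variable series $\phi(z,Y)$ has at least $(p-1)p^{n-1}$ zeroes, one from each conjugate and pairwise distinct, while hypothesis (1) plus Weierstrass preparation caps the number of zeroes of $\phi(z,Y)$ at $\lambda$; this yields $\lambda\geq(p-1)p^{n-1}$ directly. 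Your route---multiply the conjugates, descend the product to $\IZ_p[[X,Y]]$, reduce mod $p$, and compare $Y$-adic orders at $X=0$---is purely algebraic and avoids any zero-counting over $\IC_p$, but it is precisely what forces you through the coprimality/integrality step you flag as the main obstacle. That step is true but deserves a cleaner justification than a ``Weierstrass/Gauss-lemma argument'': $\IZ_p[\xi][[X,Y]]$ is a regular local ring, hence a UFD; each conjugate factor is prime because it is linear in one variable with unit coefficient, so the quotient by it is a domain (isomorphic to a one-variable power series ring over $\IZ_p[\xi]$); distinct conjugates are non-associate since they have distinct zero loci; hence the product divides $\phi$ integrally, and uniqueness of the cofactor plus Galois invariance places it in $\IZ_p[[X,Y]]$. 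With that inserted your proof is complete; alternatively, you could adopt the paper's zero-count and delete the product step altogether. One small credit to your version: the explicit treatment of $N=0$ and of the order $p^{\ord_p N}$ appearing in the swapped factor covers degenerate cases that the paper's write-up passes over silently.
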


\begin{proof}
By Theorem \ref{thm:mainrigidity}, since $\phi(X,Y)$ has infinitely many zeroes of the form $(\zeta \gamma^{k-2} -1 , \zeta' \gamma^{(k-2)/2})$, we have that $\phi(X,Y)$ is divisible by a toroidal factor of one of the following two forms:
$$
 \gamma^A (1+X)^N - \xi (1+Y) \text{~~or~~}
 \gamma^A (1+X) - \xi (1+Y)^N. 
$$
As $\gamma \equiv 1 \pmod{p}$, the first of these factors is divisible by $p$ after evaluating at $(x,y) \equiv (\zeta-1,\xi^{-1} \zeta^N -1) \pmod{p}$ while the second of these factors is divisible by $p$ after evaluating at $(x,y) \equiv (\xi \zeta^N-1,\zeta-1) \pmod{p}$.

So all that remains is to verify the bound on the multiplicative order of $\xi$.  To this end, note that if $\xi$ has order $p^n$, then the divisibility of $\phi(X,Y) \in \Zp[[X,Y]]$ by the above toroidal factor implies the divisibility by analogous toroidal factors obtained by replacing $\xi$ with any of its Galois conjugates.  In particular, we obtain $(p-1)p^{n-1}$ distinct toroidal factors which implies that $\phi(z,Y)$, for a fixed $z$ in the open disc unit, has at least $(p-1)p^{n-1}$ zeroes (in $Y$).  But our assumption that $\phi_\lambda(X)$ is a unit power series implies that $\phi(z,Y)$ has at most $\lambda$ zeroes.  Thus $\lambda \geq (p-1)p^{n-1}$ which implies $\lambda \cdot \frac{p}{p-1} \leq p^n$ as claimed.
\end{proof}

Taking $\phi(X,Y)$ in Proposition \ref{prop:modpchecking} to be $L_{p,i}(w,T)$, a branch of a two-variable $p$-adic $L$-function, gives a computational method for proving that only finitely many classical $L$-values in a Hida family vanish.  Indeed, first note that assumption \eqref{part:lambda} holds when the $\mu$-invariant of any specialization of the family vanishes (see \cite[Theorems 3.7.5 and 3.7.7]{EmertonPollackWeston}).
Further, if infinitely many classical $L$-values interpolated by $L_{p,i}(w,T)$ vanish, then Proposition \ref{prop:modpchecking} implies that for each $n$ (as long as $p^n \geq \lambda \cdot \frac{p}{p-1}$), we have
\begin{equation}
\label{eqn:ineq}
\ord_p(L_{p,i}(\zeta-1,\zeta'-1)) \geq 1
\end{equation}
for some $\zeta,\zeta' \in \mu_{p^n}$ with at least one of $\zeta$ and $\zeta'$ primitive.  In particular, by computing finitely many values of the two-variable $p$-adic $L$-function, one can try to verify that the inequality in \eqref{eqn:ineq} fails for all such $\zeta$ and $\zeta'$.  If successful, such a computation would imply that there are only finitely many classical $L$-values vanishing in the family.  If unsuccessful, one could simply repeat the computation with a larger value of $n$.  Such computations will be described in detail in section \ref{sec:computations}.

We note though that there are cases when two-variable $p$-adic $L$-functions have toroidal factors.  As mentioned in section \ref{sec:back_nonvanish}, if $i$ is congruent to $r$ or $r+\frac{p-1}{2}$ modulo $p-1$, then $L_{p,i}(w,T)$ has a functional equation which relates this power series to itself.  (Recall that $r$ is defined by our Hida family as the unique residue class mod $p-1$ such that all classical specializations of the Hida family without nebentype at $p$ have weights congruent to $r+2$ mod $p-1$.)  

When the sign of the functional equation in this case is $-1$, then $L_{p,i}(w,T)$ is divisible by $(1+w)-(1+T)^2$ which is indeed a toroidal factor.  In this case, Proposition \ref{prop:modpchecking} cannot be successfully applied to $L_{p,i}(w,T)$ to deduce that only finitely many $L$-values in the family vanish (as this is simply not true!).  

Instead though, one can apply this Proposition to $\ds \frac{L_{p,i}(w,T)}{(1+w)-(1+T)^2}$.  
In this case, if infinitely many classical $L$-values interpolated by $L_{p,i}(w,T)$ vanish (apart from the ones forced to vanish for functional equation reasons), then Proposition \ref{prop:modpchecking} implies that 
\begin{equation}
\label{eqn:ineq2}
\ord_p\left(\frac{L_{p,i}(\zeta-1+p,\zeta'-1)}{\zeta+p-(\zeta')^2}\right) \geq 1
\end{equation}
for some $\zeta,\zeta' \in \mu_{p^n}$ with at least one of $\zeta$ and $\zeta'$ primitive.  Here the additive factor of $p$ is included to avoid the zeroes of $(1+w)-(1+T)^2$.  As before, by computing finitely many values of $L_{p,i}(w,T)$, one could try to disprove the inequalities in \eqref{eqn:ineq2}.

\section{Computations}
\label{sec:computations}
\subsection{First method:\ low-degree coefficients}

To find examples of two-variable $p$-adic $L$-functions such that $L_{p,0}(w,T) \in (w,T)^2$---which is required for Lemma \ref{lemma:exclude}---consider a weight 2 newform $f_2$ of level $Np$ such that $a_p(f_2)=1$ and whose complex functional equation has sign $-1$.  In this case, the one-variable $p$-adic $L$-function of $f_2$ has a trivial zero which causes the sign of its $p$-adic functional equation to be $+1$.  In particular, $L_{p,0}(f_2,T)$ is divisible by $T^2$.  Consider the Hida family through $f_2$ which we assume to have rank 1 over weight space and write
$$
L_{p,0}(w,T) = a_1 + a_w w + a_T T + \dots.
$$
Then specializing to weight 2 (i.e.\ setting $w=0$) yields a power series divisible by $T^2$.  In particular, $a_1=a_T=0$.  In this case it is automatically then true that $a_w=0$ (see \cite[pg.\ 373]{BD-Hida}) and we have  $L_{p,0}(w,T) \in (w,T)^2$.

To find such an example, set $p=3$ and consider the elliptic curve E15a1. Let $f_2$ be the corresponding weight 2 newform and consider the Hida family through $f_2$ (which is indeed rank 1 over weight space).  This example is not directly interesting from the optics of this paper:\ indeed $\lambda(f_2)=0$ and the two-variable $p$-adic $L$-function of this Hida family is a unit and thus has no zeroes.  Also, $a_p(f_2)=-1$ which is the reason why these $L$-functions do not have any trivial zeroes.   Nonetheless, we can make these families interesting by twisting them by certain quadratic characters which:\ (a) force $a_p=1$ and (b) force $\lambda>0$.  As a matter of notation, let's write $L_{p,i}(\chi_D,w,T)$ for the two-variable $p$-adic $L$-function of the Hida family twisted by $\chi_D$, the quadratic character of conductor $D$.

The first interesting twist arises from twisting by $\chi_{29}$.  In this case, the $\lambda$-invariant is constantly equal to 2 in the Hida family.  To compute the corresponding two-variable $p$-adic $L$-function, we first use the algorithms of \cite{ExplicitOverconvergent} to compute a family of overconvergent modular symbols corresponding to this Hida family.  In the terminology of \cite{ExplicitOverconvergent}, we computed modulo $\widetilde{\Fil}^{30}$.  With this family in hand, one can then compute an approximation to any branch of the $p$-adic $L$-function (see \cite[section 5.4]{ExplicitOverconvergent}).  We note that from this one family, one can compute $p$-adic $L$-functions for {\it any} quadratic twist and so one does not need to recompute new families as one varies the twist.

These computations yield
$$
L_{3,0}(\chi_{29},w,T) = (3 \cdot 1243204 + O(3^{14})) w^2 + (148430 + O(3^{11})) wT + (28717 + O(3^{10}))T^2 + \cdots
$$
and unfortunately only 1 of the three quadratic coefficients are multiples of 3 and Lemma \ref{lemma:exclude} does not apply.

However, the next interesting twist is by $\chi_{41}$ and in this case
$$
L_{3,0}(\chi_{41},w,T) = (4540910 + O(3^{14})) w^2 + (3^2 \cdot 1211 + O(3^{11})) wT + (3^2 \cdot 5350 + O(3^{10}))T^2 + \cdots
$$
Here we see that exactly 2 of the 3 quadratic coefficients are multiples of 3.  Moreover, as the coefficient of $w^2$ is a unit, this coefficient witnesses the fact that $L_{3,0}(\chi_{41},w,0)$ is not 0 modulo $(3,w^4)$.  Further, the coefficient of $T^4$ (not shown above) is 1 modulo 3 and this witnesses the fact that $L_{3,0}(\chi_{41},0,T)$ is not 0 modulo $(3,T^4)$.  In particular, Lemma \ref{lemma:exclude} combined with Theorem \ref{thm:mainrigidity} imply that only finitely many classical $L$-value in our Hida family vanish.

\subsection{Second method:\ mod $p$ vanishing of specializations}
\label{sec:modp_comp}

We now consider the method of ruling out toroidal factors described in section \ref{sec:modp_check} which we will see works much more generally and systematically than the method of the previous section.  To demonstrate this, we consider three explicit Hida families all of which are rank 1 over the Iwasawa algebra.

\begin{enumerate}
    \item $p=3$, $N=5$, $r=0$
    \item $p=5$, $N=3$, $r=0$
    \item $p=11$, $N=1$, $r=0$
\end{enumerate}

In all three cases $S_2(\Gamma_0(Np))$ is 1-dimensional and so simply specifying $p$, $N$, and $r$ determines the Hida family.  The first two examples specialize in weight 2 to the modular form which corresponds to the elliptic curve $X_0(15)$.  The third example specializes to the modular form corresponding to $X_0(11)$ in weight 2 and to $\Delta$ in weight 12 ($11$-stabilized to $\Gamma_0(11)$).

For all three cases, we considered a range of quadratic twists with discriminant prime to $pN$, and for each fixed twist, the method worked and we established that only finitely many $L$-values of the form $L(f_{k,\epsilon}\otimes \chi_D, \chi,k/2)$ vanish apart from the $L$-values that are forced to vanish from the sign of their functional equation.  In particular, in the $p=3$ case, we considered twists by $\chi_D$ with $|D| < 350$ while in the $p=5$ and $p=11$ case, we considered twists with $|D| < 500$. The results are summarized in Theorem \ref{thm:maincomputational}.

The method is essentially the same in each case and so we illustrate it in a couple of examples.

\begin{example}($p=3$, $N=5$)
\label{ex:3}
As described in the previous section, we first use the algorithms of \cite{ExplicitOverconvergent} to compute a family of overconvergent modular symbols corresponding to our given Hida family.  In the terminology of \cite{ExplicitOverconvergent}, we computed modulo $\widetilde{\Fil}^{M}$ for varying $M$ depending on the accuracy we were seeking.  But for each fixed $M$, one only needs to compute this family of symbols one time to compute the $p$-adic $L$-function of any twist of the family.  

As before, we denote the $i$-th branch of the $p$-adic $L$-function twisted by $\chi_D$ as $L_{p,i}(\chi_D,w,T)$.  We note that working with overconvergent symbols modulo $\widetilde{\Fil}^{30}$ yields an approximation of $L_{p,i,D}(w,T)$:\ namely, we only recover terms with total degree less than 9 and, moreover, the accuracy of the known coefficients lessens as the total degree increases.

Let's take the case where we twist by the quadratic character of conductor $29$---the example we failed to verify in the previous section.  We compute both branches of the two-variable $p$-adic $L$-function:\  $L_{3,0}(\chi_{29},w,T)$ and $L_{3,1}(\chi_{29},w,T)$---that is, we take $i=0$ and $i=1$.  
For $i=1$, the resulting branch of the two-variable $p$-adic $L$-function is a unit and so never vanishes.  Thus there is nothing to check in this case.  

For $i=0$, we see that the $\mu$-invariants in this case are all 0 and the constant value of the $\lambda$-invariant is 2.  By Proposition \ref{prop:modpchecking}, if infinitely many classical $L$-values in this family vanish, then there is some $N \in \Z_3$ and some $\xi \in \mu_p$ such that for any $\zeta \in \mu_{3^\infty}$, we have that either 
$$
L_{3,0}(\chi_{29},\zeta-1,\xi^{-1} \zeta^N-1) \text{~~or~~} L_{3,0}(\chi_{29},\xi \zeta^N-1, \zeta-1)
$$
has valuation greater than or equal to 1.  To show that this does {\bf not} occur, we fix a primitive $\zeta \in \mu_{3^m}$ for $m \geq 1$ and aim to show that 
\begin{equation}
\label{eqn:values}
L_{3,0}(\chi_{29},\zeta-1,\zeta'-1) \text{~~and~~} L_{3,0}(\chi_{29},\zeta'-1,\zeta-1)
\end{equation}
all have valuation strictly less than 1 as $\zeta'$ varies in $\mu_{3^m}$.  Indeed, this computation suffices since $\xi \zeta^N$ and $\xi^{-1} \zeta^N$ both live in  $\mu_{3^m}$.

Beginning with $m=1$, we fix any primitive $\zeta \in \mu_3$ and compute the valuations of our approximations to the expressions in (\ref{eqn:values}) as $\zeta'$ varies over $\mu_3$.  In this case, there is a valuation of $5/2$ and as this valuation is greater than or equal to 1, we need to try a higher value of $m$.  Moving on to $m=2$, we fix a primitive $\zeta \in \mu_9$ and now vary over $\zeta' \in \mu_9$ repeating the analogous computation.  Again a valuation greater than 1 appears and we must increase $m$.  Taking $m=3$, fixing a primitive $\zeta \in \mu_{27}$, and varying over $\zeta' \in \mu_{27}$ yields a maximum valuation of $7/18$, finally less than 1.

At this point, we must take care that our approximations were close enough to ensure that the valuations we compute are the true valuations of the terms in \eqref{eqn:values}.  In this case, our approximation of $L_{3,0}(\chi_{29},w,T)$ only contains terms of total degree 8 and less.  Thus evaluating at $(\zeta-1,\zeta'-1)$ yields a value that is accurate modulo $p^{9/18}=p^{1/2}$ as $\zeta-1$ has valuation $1/18$ and $\zeta'-1$ has valuation at least $1/18$.\footnote{To know the accuracy of our approximation we need to take into account both the tail of the series being ignored and the accuracy of the coefficients that we have computed.  However, in every example we computed, the error caused by the tail of the series always outweighed the errors in coefficients.}   In particular, we know that the computed valuations are all correct as they are all less than or equal to $7/18 < 1/2$.

Finally, as the maximal valuation $7/18$ is strictly less than 1, we deduce by Proposition \ref{prop:modpchecking} that only finitely many classical $L$-values vanish in this family.
\end{example}

\begin{example} ($p=5$ and $N=3$)
When $p=5$ there are 4 branches of the $p$-adic $L$-function.  Let's look at the case where we twist by a quadratic character of conductor 29.  Computing with a family of overconvergent modular symbols accurate modulo $\wt{\Fil}^{5}$ yields $L_{5,i}(\chi_{29},w,T)$ computed up to terms with total degree 3.

For $i=1$ and $i=3$, the resulting branches are units and thus have no zeroes to worry about.  For the $i=0$ branch, the resulting $p$-adic $L$-function has $\mu$-invariant constantly equal to 0 and $\lambda$-invariant constantly equal to 2. Proceeding as in Example \ref{ex:3}, we fix a primitive $\zeta \in \mu_5$ and compute
\begin{equation}
L_{5,0}(\chi_{29},\zeta-1,\zeta'-1) \text{~~and~~} L_{5,0}(\chi_{29},\zeta'-1,\zeta-1)
\end{equation}
as $\zeta'$ varies in $\mu_5$.  In this case, the maximum valuation of our approximation to these values is $1/2$.  Since our approximation of $L_{5,i}(\chi_{29},w,T)$ only contains terms of total degree at most 3 and we are evaluating $w$ and $T$ at numbers with valuation at least $1/4$, our results are accurate modulo $p^{4/4} = p^1$ and the computed maximum valuation of $1/2$ is correct.  Further, since $1/2 < 1$, by Proposition  \ref{prop:modpchecking}, we deduce that only finitely many classical $L$-values in this family vanishes.

Finally, we study $L_{5,2}(\chi_{29},w,T)$---the $i=2$ branch of this $p$-adic $L$-function.  This case is significantly more complicated as the sign of the functional equation is $-1$ and $L_{5,2}(\chi_{29},w,T)$ is divisible by $(1+w)-(1+T)^2$ and we now need to apply Proposition \ref{prop:modpchecking} to $\ds \frac{L_{5,2}(\chi_{29},w,T)}{(1+w)-(1+T)^2}$.  Further, this divisibility by a toroidal factor causes the valuations we encounter to be higher which necessitates us to use more accurate modular symbols.

To this end, we use a family of overconvergent modular symbols accurate modulo $\wt{\Fil}^{46}$ which yields $L_{5,2}(\chi_{29},w,T)$ up to terms with total degree 27.  If infinitely many classical $L$-values vanish (apart from the ones forced to vanish by the sign of the functional equation), Proposition \ref{prop:modpchecking} would guarantee that there is some $\xi \in \mu_5$ and $N \in \Z_5$ such that for any fixed $\zeta \in \mu_{5^n}$, one of 
$$
\frac{L_{5,2}(\chi_{29},\zeta-1+p,\xi^{-1} \zeta^N-1)}{\zeta-\xi^{-2}\zeta^{2N}+p}
\text{~~or~~} 
\frac{L_{5,2}(\chi_{29},\xi \zeta^N-1+p, \zeta-1)}{\xi\zeta^N-\zeta^2+p}
$$
has valuation greater than or equal to 1.  Here we insert an additive factor of $p$ to avoid the zeroes of $(1+w)-(1+T)^2$.  To verify that this infinite vanishing does {\bf not} occur, we aim to show that the valuations of
$$
\frac{L_{5,2}(\chi_{29},\zeta-1+p,\zeta'-1)}{\zeta-(\zeta')^2+p} \text{~~and~~} \frac{L_{5,2}(\chi_{29},\zeta'-1+p,\zeta-1)}{\zeta'-\zeta^2+p}
$$
are all strictly less than 1 for some fixed $\zeta \in \mu_{p^m}$ ($m \geq 1$) and all $\zeta' \in \mu_{p^m}$.

We take $\zeta$ a primitive 25-th root of unity.  Computing the numerators of all of the above fractions shows that the maximal valuation occurs at $(\zeta-1+p,\zeta^2-1)$ and $(\zeta^{13}-1+p,\zeta-1)$ and this maximal valuation is $6/5$.  As $L_{5,2}(\chi_{29},w,T)$ contains terms up to total degree 25, and we are evaluating at values of $w$ and $T$ with valuation at least $1/20$, our values are known modulo $p^{26/20}$.  Fortunately, $6/5 < 26/20$ guaranteeing that all of our valuation computations are correct.

We then subtract away the valuations of the denominators of the above fractions (which we know exactly) and discover the the maximal valuation of all of these fractions is $1/5$.  As $1/5<1$, from Proposition \ref{prop:modpchecking}, we deduce that only finitely many classical $L$-values in this family vanish (apart from the ones forced to vanish from signs of functional equations).

One might wonder why we didn't use the family of overconvergent modular symbols modulo $\wt{\Fil}^{46}$ throughout this entire example.  The reason is simple:\ using a family modulo $\wt{\Fil}^{5}$ to compute all of the branches of the $p$-adic $L$-functions takes a few seconds.  Working modulo $\wt{\Fil}^{46}$ requires several hours just to compute a single branch of the $p$-adic $L$-functions.  So we only used families with high accuracy when we really needed to.
\end{example}

\begin{example} ($p=11$ and $N=1$)
For $p=11$, there are 10 branches of the $p$-adic $L$-function.  Let's look in detail at the case where we twist by the quadratic character of conductor 129.  In this case, we used a family of overconvergent modular symbols modulo $\wt{\Fil}^5$ yielding $L_{11,i}(\chi_{129},w,T)$ with terms up to total degree 3. 

Of the 10 branches, there are 5 units (corresponding to $i=2,4,5,6,8$) which we don't need to consider because they have no zeroes.  For $i=0$, we have that the constant value of $\lambda$ is 1 and in this case the sign of the functional equation is $-1$.  Thus, $L_{11,0}(\chi_{129},w,T)$ is divisible by $(1+w)-(1+T)^2$ and this factor accounts for all of its zeroes (as $\lambda=1$) and again we have accounted for all of the zeroes.  

For each of the remaining branches ($i=1,3,7,9$), the $\mu$-invariant vanishes and the constant value of the $\lambda$-invariant is 1.  But this time the functional equation is not accounting for these zeroes.  Indeed, the functional equation for these indices simply relates two distinct branches of the $p$-adic $L$-function and one does not obtain any forced vanishing.  To proceed, we must use the methods of this paper to ensure that only finitely many classical $L$-values vanish in this family. 

Following the method of the previous two examples, we fix a primitive $\zeta \in \mu_{11}$ and want to show that the valuations of 
$$
L_{11,i}(\chi_{129},\zeta-1,\zeta'-1) \text{~~and~~} L_{11,i}(\chi_{129},\zeta'-1,\zeta-1)
$$
are all strictly less than 1 as $\zeta'$ varies over $\mu_{11}$.  For each branch, the maximum valuation of our approximation to these values is $1/5$.  As we know $L_{11,i}(\chi_{129},w,T)$ up to terms of total degree 3 and we are evaluating $w$ and $T$ at numbers with valuation at least $1/10$, our specializations are accurate modulo $p^{2/5}$.  As $1/5 < 2/5$, the valuations we computed are the true valuations and as $1/5 < 1$, Proposition \ref{prop:modpchecking} implies that only finitely many of the classical $L$-values interpolated by these remaining 4 branches vanish.
\end{example}
\nocite{}
\bibliography{padicbiblio.bib}

\end{document}